\definecolor{citecol}{RGB}{145, 1, 1}
\theoremstyle{plain}
\newtheorem {lemma}{Lemma}[section] % add section to get consequence numbering
\newtheorem {theorem}[lemma]{Theorem}
\newtheorem {thm}[lemma]{Theorem}
\newtheorem {corollary}[lemma]{Corollary}
\newtheorem {cor}[lemma]{Corollary}
\newtheorem {prop}[lemma]{Proposition}
\theoremstyle{definition}
\newtheorem {remark}[lemma]{Remark}
\newtheorem {example}[lemma]{Example}
\theoremstyle{definition}
\newtheorem{deff}[lemma]{Definition}{}
\newcommand{\M}{\operatorname{\mathbb M}}
\newcommand{\reg}{\operatorname{reg}}
\newcommand{\K}{\mathsf{k}}
\newcommand{\Mod}{\operatorname{\!-Mod}}
\def\M{\mathbb{M}}
\newcommand{\Hom}{\operatorname{Hom}}
\renewcommand{\Im}{\operatorname{Im}}
\begin{document}

\title[Algebras Morita equivalent to weighted Leavitt path algebras]{Unital algebras being Morita equivalent to weighted Leavitt path algebras}

%\begin{thanks}
%\end{thanks}

\author{Roozbeh Hazrat}\address{
Centre for Research in Mathematics and Data Science\\
Western Sydney University\\
Australia}
\email{r.hazrat@westernsydney.edu.au}

\author{Tran Giang  Nam}
\address{Institute of Mathematics, VAST, 18 Hoang Quoc Viet, Cau Giay, Hanoi, Vietnam}
\email{tgnam@math.ac.vn}

%\subjclass[2020]{16S88, 05C57} 
%\keywords{Abelian sandpile model, sandpile monoid, weighted Leavitt path algebra, Grothendieck group}

%\begin{comment}

\begin{abstract}
In this article, we describe the endomorphism ring of a finitely generated progenerator module of a weighted Leavitt path algebra $L_{\K}(E, w)$ of a finite vertex weighted graph $(E, w)$. 
Contrary to the case of Leavitt path algebras, we show that a (full) corner of a weighted Leavitt path algebra is, in general, not isomorphic to a weighted Leavitt path algebra. However, using the above result, we show that   for every full idempotent $\epsilon$ in $L_{\K}(E, w)$, there exists a positive integer $n$ such that $\M_n(\epsilon L_{\K}(E, w) \epsilon)$ is isomorphic to the weighted Leavitt path algebra of a weighted graph explicitly constructed from $(E, w)$.  We then completely describe unital algebras being Morita equivalent to weighted Leavitt path algebras of vertex weighted graphs. In particular, we characterize unital algebras being Morita equivalent to sandpile algebras.

\medskip

\textbf{Mathematics Subject Classifications 2020}: 16S88, 05C57, 16S50

\textbf{Key words}: Weighted Leavitt path algebra; Morita equivalence.
\end{abstract}

%\end{comment}

\maketitle

\section{Introduction} \label{introkai}

The notion of abelian sandpile models was invented in $1987$ by Bak, Tang and Wiesenfeld~\cite{bak}. While defined by a simple local rule, it produces self-similar global patterns that call for an explanation.  The models have been used to describe phenomena such as forest fires, traffic jams, stock market fluctuations, etc. The book of Bak~\cite{bakbook} describes how  events in nature apparently follow this type of behaviour. In~\cite{D1} Dhar systematically associated finite abelian monoid and groups to  sandpile models, now called respectively \emph{sandpile monoids} and \emph{sandpile groups}, and championed the use of them as an invariant which proved to capture many properties of the model. These algebraic structures constitute one of the main themes of the subject and they have been extensively studied (see, e.g., \cite{T, BT,CGGMS} and references there). The abelian sandpile model was independently discovered in $1991$ by combinatorialists Bj\"{o}rner, Lov\'{a}sz and Shor \cite{BLS}, who called it {\it chip-firing}. Indeed, in the last two decades the subject has been enriched by an exhilarating interaction of numerous areas of mathematics, including statistical physics, combinatorics, free boundary PDE, probability, potential theory, number theory and group theory; refer to Levine and Peres's recent survey on the subject \cite{LevinePeres} in more details.
%In particular \cite{BT,CGGMS} could classify the idempotent elements of sandpile monoids.The books of Klivans~\cite{Kl} and Corry and Perkinson~\cite{corper} give self-contained treatments of the subject of sandpile models and their relations to many branches of mathematics. 

In a different realm, the notion of Leavitt path algebras $L_{\K}(E)$ associated to directed graphs $E$ were introduced by Abrams and Aranda Pino in \cite{aap05}, and independently Ara, Moreno, and Pardo in \cite{amp}. These are a generalisation of algebras introduced by William Leavitt in  1962~\cite{vitt62} as a ``universal'' algebra  $A$ of type $(1,n)$, so that  $A\cong A^{n}$ as right $A$-modules, where $n\in \mathbb N^+$. It was established quite early on in the theory that Leavitt path algebras only produce rings of ranks $(1,n)$. In \cite{vitt62} Leavitt further constructed rings of type $(m,n)$, where $1<m<n$ and showed that these algebras surprisingly are domains.  When $m\ge 2$, this universal ring is not realizable as a Leavitt path algebra. With this in mind, the notion of {\it weighted}  Leavitt path algebras $L_{\K}(E, w)$ associated to weighted graphs $(E, w)$  were introduced by the first author in \cite{H} (see \cite{Pre} for a nice overview of this topic).  The weighted Leavitt path algebras provide a natural context in which all of Leavitt's algebras (corresponding to any pair $m,,n \in \mathbb{N}$) can be realised as a specific example.

The study of the commutative monoid $\mathcal{V}(A)$ of isomorphism classes of finitely generated projective left modules of a unital ring $A$ (with operation $\oplus$) goes back to the work of Grothendieck and Serre.  For a Leavitt path algebra $L_{\K}(E)$, the monoid $\mathcal{V}(L_{\K}(E))$ has received substantial attention since the introduction of the topic. Furthermore, the monoid $\mathcal{V}(L_{\K}(E, w))$ has been described completely by the first author in \cite{H}, and subsequently by Preusser in \cite{P}.  
In~\cite{GeneRooz} Abrams and the first author established that these monoids could be naturally related to the sandpile monoids of graphs. This relationship allows us to associate an algebra, a sandpile algebra (see \cite[Definition 6.11]{GeneRooz}), to the theory of sandpile models, thereby opening up an avenue by which to investigate sandpile models via the structure of weighted Leavitt path algebras (particularly the sandpile algebras), and vice versa. 

In this article, we investigate the relations between these objects by characterizing unital algebras which are Morita equivalent to sandpile algebras. More precisely, we completely describe unital algebras which are Morita equivalent to weighted Leavitt path algebras of finite vertex weighted graphs--containing the class of sandpile algebras as a special example.

The article is organized as follows. In Section 2, for the reader's convenience, we collect some basic definitions and facts on weighted graphs and weighted Leavitt path algebras, as well as provide a description of the quotient of a weighted Leavitt path algebra by an ideal generated a set of vertices (Theorem \ref{quotientwLPa}) and the matrix ring over a weighted Leavitt path algebra (Theorem \ref{matrixalgeoverwLPA}).

In Section 3, based on Theorem \ref{quotientwLPa} and the structure of the monoid $\mathcal{V}(L_{\K}(E, w))$ established by the first author in \cite[Theorem 5.21]{H}, we completely describe the endomorphism ring of a projective and finitely generated generator for the category of left modules over the  weighted Levitt path algebra $L_{\K}(E, w)$ of a finite vertex weighted graph $(E, w)$ (Theorem \ref{maintheo1}). Consequently, this yields that for every full idempotent $\epsilon$ in $L_{\K}(E, w)$, there exists a positive integer $n$ such that $\M_n(\epsilon L_{\K}(E, w) \epsilon)$ is isomorphic to the weighted Leavitt path algebra of a finite vertex weighted graph explicitly constructed from $(E, w)$ (Corollary \ref{CornerofwLPAs}). Also, we show that a corner of a weighted Leavitt path algebra of a finite vertex weighted graph is, in general, not a weighted Leavitt path algebra (Example \ref{CornerofwLPAsexample}). We should mention every corner of a Leavitt path algebra of a finite graph is always isomorphic to a Leavitt path algebra (see \cite[Theorem 3.15]{an:colpaofgalpa}), and a corner of a Leavitt path algebra of an arbitrary graph need not in general be isomorphic to a Leavitt path algebra (see \cite[Example 2.11 and Corollary 3.8]{adn:rcolpaasawcctg}).
As a consequence of Theorem \ref{matrixalgeoverwLPA} and \ref{CornerofwLPAs}, we obtain that  a unital $\K$-algebra $A$ is Morita equivalent to the weighted Leavitt path algebra $L_K(E, w)$ of a finite vertex weighted graph $(E, w)$ if and only if there exists a positive integer $n$ such that $\M_n(A)$ is isomorphic to the weighted Leavitt path algebra of a finite vertex weighted graph explicitly constructed from $(E, w)$ (Theorem \ref{maintheo2}). In particular, we completely describe unital algebras being Morita equivalent to sandpile algebras (Corollary \ref{sandpilealgebra}). 

%Finally, we show that there exist algebras of type $(m, n)$ ($1< m< n$) but is not isomorphic to the Leavitt algebra $L_{\K}(m, n)$ (Corollary \ref{algebraoftype(m, n)}).\medskip
 
Throughout we write $\mathbb N$ for the set of non-negative integers, and $\mathbb N^+$ for the set of positive integers.

 \section{Weighted Leavitt path algebras} \label{weightedgraphssec}
%In order to define sandpile monoids and weighted Leavitt path algebras we need to review the notion of a weighted graph. 
In this section, we give a description of the quotient of a weighted Leavitt path algebra by an ideal generated a set of vertices (Theorem \ref{quotientwLPa}) and the matrix ring over a weighted Leavitt path algebra (Theorem \ref{matrixalgeoverwLPA}).\medskip

A \emph{(directed) graph} is a quadruple $E=(E^0,E^1,s,r)$, where $E^0$ and $E^1$ are sets and $s,r:E^1\rightarrow E^0$ are maps.   The elements of $E^0$ are called 
\emph{vertices} and the elements of $E^1$ \emph{edges}.  If $e$ is an edge, then $s(e)$ is called its \emph{source} and $r(e)$ its \emph{range}. If $v$ is a vertex and $e$ an edge, we say that $v$ {\it emits} $e$ if $s(e)=v$, and $v$ {\it receives} $e$ if $r(e)=v$.   A graph $E$ is called \emph{row-finite} if any vertex in $E$ emits a finite number (possibly zero) of edges. The graph $E$ is called  \emph{finite} if $E^0$ and $E^1$ are finite sets. 
A vertex is called a {\it sink} if it emits no edges.  
A vertex is called \emph{regular} if it is not a sink and does not emit infinitely many edges.  The subset of $E^0$ consisting of all the regular vertices is denoted by $E^0_{\reg}$. A vertex is called a {\it course} if it receives no edges. 

A  {\it path}  $p$ in $E$ is a sequence $p=e_{1}e_{2}\cdots e_{n}$ of edges in $E$ such that $r(e_{i})=s(e_{i+1})$ for $1\leq i\leq n-1$. In this case, we say that the path $p$ starts at the vertex $s(p) := s(e_{1})$ and and ends at the vertex $r(p) :=r(e_{n})$, and has  \emph{length} $|p|:=n$. We assign the length zero to vertices.  A \emph{closed path} (based at $v$) is a  path $p$ such that $s(p)=r(p)=v$. A \emph{cycle} (based at $v$) is a closed path $p=e_1 e_2 \cdots e_n$ based at $v$ such that $s(e_i)\neq s(e_j)$ for any $i\neq j$. A cycle $c$ is called a {\it loop} if $|c| =1$.
% An edge $f\in E^1$ is called an \emph{exit} of a cycle $e_1\cdots e_n$ if there is an $1\leq i \leq n$ such that $s(f)=s(e_i)$ and $f\neq e_i$.
 
%A collection of graphs which will be the primary focus of this article is now easy to define.   
 
% \begin{deff}\label{sanddefmon}A finite directed graph $E$ is called a \emph{sandpile graph} if $E$ has a unique sink (denote it by $s$), and for every $v\in E^0$ there is a  path $p$ with $s(p)=v$ and $r(p) = s$.  \end{deff}
 
 A subset $H \subseteq E^0$ is said to be \emph{hereditary} if
for any $e \in E^1$,  $s(e)\in H$ implies $r(e)\in H$. 
%A subset $H \subseteq E^0$ is called \emph{saturated} if whenever  $r(s^{-1}(v)) \subseteq H$ for a regular vertex $v$, then $v\in H$.  
%If $H$ is a hereditary, we may consider the directed graph, also denoted by  $H$,  consisting of all vertices of $H$ and all edges emitting from these vertices.  
% {\bf XXX Roozbeh:  do we need to assume that $H$ is hereditary and saturated in order to form $E/H$?   Can't we simply define $E/H$ as the subgraph where you eliminate all the vertices in $H$ together with any edges that either start or end in $H$?   We note later in the article that $S$ (set of vertices that don't connect to a cycle) is hereditary and saturated, but do we ever use those properties?}
 For a hereditary subset $H$ of $E$, we define the quotient graph $E/H$ as follows:
  $$(E/ H)^0=E^0\setminus H^0   \ \ \mbox{and} \ \  (E/H)^1=\{e\in E^1\;| \ r(e)\notin H\}.$$ The source and range maps of $E/H$ are the source and range maps  restricted from the graph $E$.
 %We need the following correspondence between the hereditary and saturated subsets of a graph and its quotient graph. 
%For hereditary saturated subsets $H$ and $K$ of the graph $E$ with  $H \subseteq  K$, define the quotient graph $K / H $ as a graph such that 
%$(K/ H)^0=K^0\setminus H^0$ and $(K/H)^1=\{e\in E^1\;|\; s(e)\in K, r(e)\notin H\}$. The source and range maps of $K/H$ are restricted from the graph $E$. If $K=E^0$, then $K/H$ is the \emph{quotient graph} $E/H$ (\cite[Definition~2.4.11]{TheBook}). With this construction, 
% $(K/ H)^0$ is a hereditary and saturated subset of the graph $E/H$, which we also denote $K/H$. 

% The main ring-theoretic notion addressed in this paper is the  concept of  algebras associated to  weighted graphs. 
 %We begin with the definition of weighted graphs. 

\begin{deff}\label{weightedgraphdef}%[Weighted graph, vertex weighted graph, balanced weighted graph]
A \emph{weighted graph} is a pair $(E,w)$, where $E$ is a graph and $w:E^1\rightarrow \mathbb N^+$ is a map. If $e\in E^1$, then $w(e)$ is called the \emph{weight} of $e$. A weighted graph $(E,w)$ is called  \emph{row-finite} if the graph $E$ is row-finite. A weighted graph $(E,w)$ is called  \emph{finite} if the graph $E$ is finite.

For each regular vertex $v$ in a weighted graph $(E,w)$ we set $w(v):=\max\{w(e)\mid e\in s^{-1}(v)\}$.  This gives  a map (called $w$ again) $w:E_{\reg}^0\rightarrow \mathbb N^+$. 

A row-finite weighted graph $(E,w)$ is called a \emph{vertex weighted graph} if for any $v\in E^0_{\reg}$, $w(e) = w(e')$ for all  $e,e' \in s^{-1}(v)$.     

A vertex weighted graph is called a \emph{balanced weighted graph} if  $w(v)=|s^{-1}(v)|$ for all $v\in E_{\reg}^0$.   
\end{deff}

% In this note we consider vertex weighted graphs and balanced weighted graphs.   We are interested in certain monoids associated to these weighted graphs. 

It is worth mentioning the following note.

\begin{remark}\label{weightonquotientremark}
Let $(E,w)$ be a row-finite weighted graph and $H$ a hereditary subset of $E^0$. We form the quotient graph $E/H$. Then, there is a natural way to define a weight function $w_r$ on $E/H$ by restricting the weight function on $E$ to $E/H$. 
\end{remark}

We next recall the notion of weighted Leavitt path algebras. These are algebras associated to row-finite weighted graphs. We refer the reader to \cite{H} and \cite{Pre} for a detailed analysis of these algebras. 
\begin{deff}\label{weighteddef}
Let $(E,w)$ be a  row-finite weighted graph and $\K$ a field.  The free $\K$-algebra generated by $\{v,e_i,e_i^*\mid v\in E^0, e\in E^1, 1\leq i\leq w(e)\}$ subject to relations
\begin{enumerate}[(i)]
		\item $uv=\delta_{uv}u,  \text{ where } u,v\in E^0$,
		\medskip
		\item $s(e)e_i=e_i=e_ir(e),~r(e)e_i^*=e_i^*=e_i^*s(e),  \text{ where } e\in E^1, 1\leq i\leq w(e)$,
		\medskip
		\item 
		$\sum_{e\in s^{-1}(v)}e_ie_j^*= \delta_{ij}v, \text{ where } v\in E_{\reg}^0 \text{ and } 1\leq i, j\leq w(v)$, 
		\medskip 
		\item $\sum_{1\leq i\leq w(v)}e_i^*f_i= \delta_{ef}r(e), \text{ where } v\in E_{\reg}^0 \text{ and } e,f\in s^{-1}(v)$,
\end{enumerate}
is called the {\it weighted Leavitt path algebra} of $(E,w)$ over $\K$, and denoted $L_\K(E,w)$, where $\delta$ is the Kronecker delta. In relations (iii) and (iv) we set $e_i$ and $e_i^*$ to be zero whenever $i > w(e)$.    
\end{deff}

Note that if the weight of each edge is $1$, then $L_\K(E,w)$ reduces to the usual Leavitt path algebra $L_\K(E)$  of the graph $E$.  It is easy to see that the mappings given by $v\longmapsto v$ for all $v\in E^0$, $e_i\longmapsto e^*_i$ and $e^*_i\longmapsto e_i$ for all $e\in E^1$ and $1\le i\le w(e)$, produce an involution on the $\K$-algebra $L_{\K}(E, w)$. Also, if the weighted graph $(E, w)$ is finite, then $L_\K(E,w)$ is a unital ring having identity $1_{L_\K(E,w)} = \sum_{v\in E^0}v$ (see \cite[Proposition 5.7 (2)]{H}). Moreover, $L_{\K}(E, w)$ has the following property: if $\mathcal{A}$ is a $\K$-algebra generated by a family of elements $\{a_v, b_e, c_{e^*}\mid v\in E^0, e\in E^1, 1\leq i\leq w(e)\}$ satisfying the relations analogous to (1) - (4)  in Definition~\ref{weighteddef}, then there is a unique $\K$-algebra homomorphism $\varphi: L_{\K}(E)\longrightarrow \mathcal{A}$ given by $\varphi(v) = a_v$, $\varphi(e) = b_e$ and $\varphi(e^*) = c_{e^*}$.  We will refer to this property as the Universal Property of $L_{\K}(E, w)$.

In \cite{HR} Hazrat and Preusser found a Gr\"{o}bner-Shirshov basis of a weighted Leavitt path algebra. We will represent again this result in Theorem \ref{GSbasis} below. To do so, we need to recall some notions. Given a row-finite weighted graph $(E, w)$ let $s(e_i) := s(e)$, $r(e_i) = r(e)$, $s(e^*_i) = r(e)$ and $r(e^*_i) = s(e)$ for all $e\in E^1$ and $1\le i\le w(e)$. Let $\langle X\rangle$ denote the set of all nonempty words over $X := \{v, e_i, e^*_i\mid v\in E^0, e\in E^1, 1\le i\le w(e)\}$. A word $p\in \langle X\rangle$ is called a {\it generalized path} in $(E, w)$ if either $p = x_1\cdots x_n$ for some $n\ge 1$ and $x_1, \hdots, x_n \in \{e_i, e_i^*\mid e\in E^1, 1\le i \le w(e)\}$ such that $r(x_k) = s(x_{k+1})$ for all $1\le k\le n-1$ or $p= x_1$ for some $x_1\in E^0$.

For every $v\in E_{\reg}^0$ fix an $\alpha^v\in s^{-1}(v)$ such that $w(\alpha^v)=w(v)$. The words 
\begin{center}
$\alpha^v_i(\alpha^v_j)^*$ ($v\in E_{\reg}^0, \ 1\le i, j\le w(v)$)	and $e_1^*f_1$ ($v\in E_{\reg}^0, \ e, f\in s^{-1}(v)$)
\end{center}
are called {\it forbidden}. A generalised path is called {\it normal} if none of its subword  is forbidden.

\begin{theorem}[{\cite[Theorem 16]{HR}}]\label{GSbasis}
Let $(E, w)$ be a row-finite weighted graph and $\K$ a field. Then the weighted Leavitt path algebra $L_{\K}(E, w)$ has a basis consisting of normal generalized paths.
\end{theorem}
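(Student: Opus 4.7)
The plan is to apply the Composition--Diamond Lemma (Gr\"obner--Shirshov basis theory) for associative algebras. I would present $L_{\K}(E,w)$ as the quotient of the free $\K$-algebra $\K\langle X\rangle$ on the alphabet $X=\{v, e_i, e_i^* : v\in E^0,\ e\in E^1,\ 1\le i\le w(e)\}$ by the ideal $I$ generated by the defining relations (i)--(iv) of Definition~\ref{weighteddef}. The goal is to exhibit a Gr\"obner--Shirshov basis of $I$ whose set of irreducible words under the associated reductions is precisely the set of normal generalized paths; the Composition--Diamond Lemma will then identify this set with a $\K$-basis of the quotient.

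First, I would fix a well-order on $X$ in which starred letters dominate unstarred ones, $e_1^*$ dominates $e_i^*$ for $i\ge 2$, and for each $v\in E_{\reg}^0$ the distinguished letter $\alpha^v_i$ dominates the corresponding $e_i$ for $e\in s^{-1}(v)\setminus\{\alpha^v\}$. Extending to the degree-lexicographic order on $\langle X\rangle$, the defining relations orient as rewriting rules whose leading monomials are precisely the forbidden words:
\[
\alpha^v_i(\alpha^v_j)^* \longrightarrow \delta_{ij}v - \!\!\sum_{e\in s^{-1}(v)\setminus\{\alpha^v\}}\!\! e_i e_j^*, \qquad e_1^* f_1 \longrightarrow \delta_{ef}\, r(e) - \sum_{i\ge 2} e_i^* f_i,
\]
supplemented by the ``trivial'' rules coming from (i)--(ii): $uv\to \delta_{uv}u$, $v^2\to v$, $s(e)e_i\to e_i$, $e_i r(e)\to e_i$, their starred analogues, and $xy\to 0$ whenever $r(x)\ne s(y)$. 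By construction, a generalized path contains no forbidden subword if and only if it is normal, so the irreducible monomials of this system are exactly the normal generalized paths.

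It remains to verify confluence, i.e.\ that every composition (overlap or inclusion ambiguity) of leading monomials reduces to zero modulo the remaining rules; this is the main technical obstacle. The nontrivial overlaps fall into three families: (a) self-overlaps of the (iii)-rule, typified by $\alpha^v_i(\alpha^v_j)^* \alpha^v_k (\alpha^v_l)^*$, where the left or the right forbidden pair may be resolved first; (b) cross-overlaps between (iii) and (iv), exemplified by $\alpha^v_i (\alpha^v_1)^* \alpha^v_1$, where the first two letters trigger (iii) and the last two trigger (iv); and (c) overlaps of the trivial compatibility rewrites with the substantive rules, which are straightforward. For family (b), the two reduction paths unwind to sums indexed by $s^{-1}(v)\setminus\{\alpha^v\}$ and by weight indices $\ge 2$, and a further application of (iii) and (iv) shows they coincide --- this is precisely the point where the compatibility of the two weighted Cuntz--Krieger relations is essential, so I expect most of the work to live here. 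Once each class of overlap is verified to be confluent, the Composition--Diamond Lemma delivers the stated basis of normal generalized paths.
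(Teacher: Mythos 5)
The paper does not prove this statement at all---it imports it verbatim from \cite[Theorem 16]{HR}---and your Composition--Diamond strategy, orienting relations (iii) and (iv) of Definition~\ref{weighteddef} so that their leading monomials are exactly the forbidden words $\alpha^v_i(\alpha^v_j)^*$ and $e_1^*f_1$, is precisely the normal-forms argument of that source, and the confluence computation you defer does go through (for the general overlap $\alpha^v_i(\alpha^v_1)^*f_1$, $f\in s^{-1}(v)$, both reduction orders terminate at $\sum_{e\neq\alpha^v}\sum_{k\ge 2}e_ie_k^*f_k$ plus matching degree-one terms). One small correction: your family (a) is not an overlap ambiguity at all, since every type-(iii) leading word ends in a starred letter and begins with an unstarred one, so the two occurrences in $\alpha^v_i(\alpha^v_j)^*\alpha^v_k(\alpha^v_l)^*$ are disjoint and resolve automatically; the only genuine compositions are your families (b) and (c).
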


Let $(E, w_E)$ be a row-finite weighted graph. A {\it weighted subgraph} $X$ of the weighted graph $X = (X^0, X^1, r_X, s_X, w_X)$ such that $X^0\subseteq E^0$, $X^1\subseteq E^1$, and $r_X, s_X$ are respectively the restrictions of $s_E, r_E$ on $X^1$ and $w_X(e) = w_E(e)$ for all $e\in X^1$. A weighted subgraph $X$ is called {\it complete} if $s^{-1}_X(v) = s^{-1}_E(v)$ for all $v\in X^0_{\reg}$.

As a consequence of Theorem \ref{GSbasis}, we have the following useful note.

\begin{cor}\label{completewsubgraph}
Let $\K$ be a field, $(E, w)$ a row-finite weighted graph and $(X, w)$	a complete weighted subgraph of $(E, w)$. Then the map $L_{\K}(X, w)\longrightarrow L_{\K}(E, w)$, defined by $v\longmapsto v$ for all $v\in X^0$,  $e_i\longmapsto e_i$ and $e^*_i\longmapsto e^*_i$ for all $e\in X^1$ and $1\le i\le w(e)$, is an injective homomorphism of $\K$-algebras.
\end{cor}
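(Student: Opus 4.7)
The plan is to construct the homomorphism via the Universal Property of $L_{\K}(X, w)$, and then prove injectivity using the Gr\"obner–Shirshov basis supplied by Theorem~\ref{GSbasis}.

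First I would define the map on generators by sending $v\mapsto v$, $e_i\mapsto e_i$, $e_i^*\mapsto e_i^*$ (now viewed as elements of $L_{\K}(E,w)$), and verify that relations (i)--(iv) of Definition~\ref{weighteddef} hold for these images in $L_{\K}(E,w)$. Relations (i) and (ii) transfer at once because source, range, and weights on $X$ are restrictions of the corresponding data on $E$. For relations (iii) and (iv), which sum over $s^{-1}(v)$ for $v\in X^0_{\reg}$, the completeness hypothesis $s_X^{-1}(v)=s_E^{-1}(v)$ guarantees that the $X$-relations are literally the $E$-relations restricted to this $v$, so they hold in $L_{\K}(E,w)$. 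The Universal Property then produces the desired $\K$-algebra homomorphism $\varphi:L_{\K}(X,w)\to L_{\K}(E,w)$.

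For injectivity I would choose the distinguished edges $\alpha^v$ consistently: for each $v\in X^0_{\reg}$ pick $\alpha^v\in s_X^{-1}(v)=s_E^{-1}(v)$ with $w(\alpha^v)=w_X(v)=w_E(v)$ (these equalities coming from completeness), and extend the choice arbitrarily to vertices in $E^0_{\reg}\setminus X^0_{\reg}$. With these simultaneous choices, Theorem~\ref{GSbasis} gives bases of normal generalized paths $B_X\subseteq L_{\K}(X,w)$ and $B_E\subseteq L_{\K}(E,w)$. Since $\varphi$ acts as the identity on the underlying words, it suffices to show $\varphi(B_X)\subseteq B_E$; the map $B_X\to B_E$ will then be injective, and so $\varphi$ is injective by $\K$-linearity.

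The crux is thus to show that a generalized path $p$ which is normal in $X$ is also normal in $E$. Suppose it contained an $E$-forbidden subword of type $\alpha^v_i(\alpha^v_j)^*$ or $e_1^*f_1$: then the edges involved would have to lie in $X^1$, which forces $v\in X^0$ and $s_X^{-1}(v)\neq\varnothing$, hence (by row-finiteness) $v\in X^0_{\reg}$. Completeness plus the consistent choice of $\alpha^v$ then shows the subword is already $X$-forbidden, contradicting normality of $p$ in $X$. The only delicate point of the argument is precisely this alignment of the distinguished edges between the two graphs; vertices of $X^0$ that happen to be sinks in $X$ but regular in $E$ cause no trouble, because any $E$-forbidden subword at such a vertex uses edges outside $X^1$ and hence cannot appear as a subword of a word over the alphabet of $X$.
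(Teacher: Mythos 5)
Your proposal is correct and follows essentially the same route as the paper: obtain the homomorphism from the Universal Property (completeness ensuring relations (iii)--(iv) transfer), then deduce injectivity from Theorem~\ref{GSbasis} by showing normal generalized paths in $X$ remain normal in $E$. You are in fact slightly more careful than the paper's proof, which silently assumes the distinguished edges $\alpha^v$ are chosen compatibly for $X$ and $E$ — the alignment you make explicit is genuinely needed for the forbidden-word comparison.
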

\begin{proof}
By the Universal Property of $L_{\K}(X, w)$, there exists a $\K$-algebra homomorphism $\phi: L_{\K}(X, w)\longrightarrow L_{\K}(E, w)$ such that $\phi(v) = v$ for all $v\in X^0$,  $\phi(e_i) = e_i$ and $\phi(e^*_i) = e^*_i$ for all $e\in X^1$ and $1\le i\le w(e)$.	Since $(X, w)$ is a complete weighted subgraph of $(E, w)$, it follows that every normal generalized path in $X$ is also a normal generalized path in $E$. From this note and Theorem \ref{GSbasis}, we immediately obtain that $\phi$ is injective, thus finishing the proof.	
\end{proof}	

%\begin{example}\label{exweighted}Weighted Leavitt path algebras were originally  conceived in \cite{H}  in order to provide a context in which to generalise the algebras $L_\K(n,n+k)$ constructed by W. Leavitt mentioned in the Introduction.  In particular, let $(E,w)$ be the weighted graph consisting of one vertex and one loop of weight $n$. Then the weighted Leavitt path algebra of $(E,w)$ is isomorphic to the Leavitt path algebra of a graph with one vertex and $n$ loops, which in turn is the Leavitt algebra $L_\K(1,n)$. \end{example}

For any unital ring $R$, $\mathcal{V}(R)$ denotes the set of isomorphism classes (denoted by $[P]$) of finitely generated projective left $R$-modules. $\mathcal{V}(R)$ is an abelian monoid with operation \[[P] + [Q] = [P\oplus Q]\] for any isomorphism classes $[P]$ and $[Q]$. On the other hand, for any row-finite vertex weighted graph $(E, w)$ the monoid $M_{(E, w)}$ is defined as follows. Denote by $\mathbb F_E$ the free abelian
monoid (written additively) with generators $E^0$, and define relations on $\mathbb F_E$ by setting 
\begin{equation}\label{meinpeace}
w(v)v = \sum_{e \in s^{-1}(v)}r(e), 
\end{equation}
for every $v\in E_{\reg}^0$. Let $\sim_{E}$ be the congruence relation on $\mathbb F_E$ generated by these relations. Then $M_{(E, w)}$ is defined to be the quotient monoid
$\mathbb F_E/\sim_{E}$; we denote an element of $M_{(E, w)}$ by $[x]$, where $x\in \mathbb F_E$. 

\begin{thm}[{\cite[Theorem 5.21]{H}} and \cite{P}]\label{wgraphmonoid}
Let $(E, w)$ be a finite vertex weighted graph and $\K$ a field. Then the map $[v]\longmapsto [L_{\K}(E, w)v]$ yields an isomorphism of abelian monoids $M_{(E, w)}\cong \mathcal{V}(L_{\K}(E, w))$. Specifically, these two useful consequences follow immediately.

$(1)$ For any $v\in E_{\reg}^0$, $(L_{\K}(E, w)v)^{w(v)}\cong \bigoplus_{v\in s^{-1}(v)}L_{\K}(E, w)r(e)$ as left $L_{\K}(E, w)$-modules;

$(2)$ For any nonzero finitely generated projective left $L_{\K}(E, w)$-module $P$, there exists a subset of (distinct) vertices $W$ of $E^0$ and positive integers $\{n_w\mid w\in W\}$ for which $P\cong \bigoplus_{w\in W}(L_{\K}(E, w)w)^{n_w}$.
\end{thm}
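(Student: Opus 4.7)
The plan is to build a monoid homomorphism $\theta\colon M_{(E, w)} \to \mathcal{V}(L_{\K}(E, w))$ that lifts the assignment $v \longmapsto [L_{\K}(E, w)v]$ from the free generators of $\mathbb F_E$, and then to prove $\theta$ is bijective while extracting both stated consequences along the way. Well-definedness of $\theta$ on the quotient $M_{(E, w)} = \mathbb F_E/\sim_E$ is exactly consequence $(1)$. To prove $(1)$, fix $v \in E_{\reg}^0$ and set $n = w(v)$; the vertex-weighted hypothesis forces $w(e) = n$ for every $e \in s^{-1}(v)$, so relations (iii) and (iv) of Definition~\ref{weighteddef} are available with a uniform index range $1 \le i \le n$. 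I would then define
\[
\Phi\colon (L_{\K}(E,w)v)^{n} \longrightarrow \bigoplus_{e \in s^{-1}(v)} L_{\K}(E,w)r(e),\qquad (a_i)_i \longmapsto \Bigl(\sum_{i=1}^{n} a_i e_i\Bigr)_{e},
\]
with candidate inverse $(b_e)_e \longmapsto \bigl(\sum_{e} b_e e_i^*\bigr)_i$. Relation (iii) reduces one composition to the identity on $(L_{\K}(E,w)v)^n$ and relation (iv) reduces the other to the identity on $\bigoplus_{e} L_{\K}(E,w)r(e)$, giving the module isomorphism of $(1)$ and hence a well-defined $\theta$.

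For consequence $(2)$, finiteness of $(E, w)$ gives $1_{L_{\K}(E,w)} = \sum_{v \in E^0} v$ as an orthogonal sum of idempotents, so $L_{\K}(E, w) = \bigoplus_{v \in E^0} L_{\K}(E, w)v$ as a left module. Any nonzero finitely generated projective $P$ embeds as a direct summand of $L_{\K}(E, w)^{N}$ for some $N$, and thus of some $\bigoplus_v (L_{\K}(E, w)v)^{m_v}$. Applying $(1)$ at regular vertices and invoking a standard refinement argument for the presentation of $M_{(E, w)}$ lets one rewrite $[P]$ as $\sum_{w \in W} n_w [L_{\K}(E, w)w]$ with distinct $W \subseteq E^0$ and $n_w \in \mathbb N^+$, which lifts to the claimed module decomposition. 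This simultaneously establishes surjectivity of $\theta$.

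The main obstacle is injectivity of $\theta$. On the monoid side, $[x] = [y]$ in $M_{(E, w)}$ is witnessed by a finite zig-zag of the elementary relation $w(v) v = \sum_{e \in s^{-1}(v)} r(e)$, and each such step lifts via $(1)$ to an isomorphism in $\mathcal{V}(L_{\K}(E, w))$. The deep direction is the converse: every algebraic isomorphism between direct sums of the $L_{\K}(E, w)v$'s must be traceable to a finite chain of such elementary monoid moves. The cleanest route, adapting the standard treatment of ordinary Leavitt path algebras, is to present $L_{\K}(E, w)$ as a Bergman-style universal $\K$-algebra assembled from the idempotent data of $E^0$ together with the matrix identities encoded in relations (iii)--(iv); Bergman's theorem then computes the $\mathcal{V}$-monoid of the universal algebra as $M_{(E, w)}$ on the nose, and one identifies this universal algebra with $L_{\K}(E, w)$ up to Morita equivalence. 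The Gr\"{o}bner-Shirshov basis from Theorem~\ref{GSbasis} is the concrete book-keeping tool that rules out any idempotent collapse during this identification.
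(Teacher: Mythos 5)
First, note that the paper offers no proof of this theorem: it is imported verbatim from \cite[Theorem 5.21]{H} and \cite{P}, so your proposal can only be measured against those sources. Your treatment of consequence $(1)$ is correct and complete: the mutually inverse maps $(a_i)_i \mapsto \bigl(\sum_i a_ie_i\bigr)_e$ and $(b_e)_e \mapsto \bigl(\sum_e b_ee_i^*\bigr)_i$ are exactly the isomorphisms encoded by relations (iii) and (iv), and you correctly isolate the vertex-weighted hypothesis as what makes the index range uniform. This also gives well-definedness of $\theta$ on $M_{(E,w)}$.

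The gaps are in the rest. For the main isomorphism you defer everything to ``present $L_{\K}(E,w)$ as a Bergman-style universal algebra and apply Bergman's theorem.'' That is indeed the route of \cite{H}, but it is where the entire content of the theorem lives: one must verify that $L_{\K}(E,w)$ is obtained from $\prod_{v\in E^0}\K$ by successively adjoining universal isomorphisms $P_v^{w(v)}\cong\bigoplus_{e\in s^{-1}(v)}P_{r(e)}$ (this is precisely where the vertex-weighted hypothesis is again essential; for general weighted graphs the relations do not have this form, which is why \cite{P} was needed), and that Bergman's computation of the $\mathcal{V}$-monoid of the resulting algebra yields exactly $\mathbb F_E/\sim_E$. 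None of this is carried out, and the Gr\"obner--Shirshov basis of Theorem \ref{GSbasis} is not the tool that does it. Second, your derivation of consequence $(2)$ is circular as written: knowing that $P$ is a direct summand of $\bigoplus_v(L_{\K}(E,w)v)^{m_v}$ does not by itself let you conclude that $P$ decomposes as $\bigoplus_{w\in W}(L_{\K}(E,w)w)^{n_w}$ --- that step requires the refinement/generation properties of $\mathcal{V}(L_{\K}(E,w))$, which you only know after the isomorphism with $M_{(E,w)}$ has been established. So $(2)$ cannot be used to ``simultaneously establish surjectivity of $\theta$''; both $(2)$ and surjectivity must be read off from the completed Bergman computation, exactly as the theorem's phrasing (``these two useful consequences follow immediately'') indicates.
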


Next we give the notion of saturated subsets of weighted graphs.

\begin{deff}\label{satuset}
Let $(E, w)$ be a row-finite weighted graph. If $v$ in $E^0$ and $1\le i\le w(v)$, denote by $A_{v,i}$ the set $\{e\in s^{-1}(v)\mid w(e)\ge i\}$. A subset $H$ of $E^0$ is called {\it saturated} if whenever $v\in E_{\reg}^0$ with the property that $\{r(e)\mid e\in A_{v,i}\}\subseteq H$ for some $1\le i\le w(v)$, then $v\in H$.
\end{deff}	

We should mention the following useful note.

\begin{remark}\label{satusetrem}
For any vertex weighted graph $(E, w)$, a subset $H$ of $E^0$ is  saturated if and only if whenever $v\in E_{\reg}^0$ with the property that $r(s^{-1}(v))\subseteq H$, then $v\in H$. 	
\end{remark}

For clarification, we illustrate Definition \ref{satuset} by presenting the following example.

\begin{example}
Let $(E, w)$ be the weighted graph
$$\xymatrix{\bullet^{u}&\bullet^{v}\ar[l]_{e} \ar[r]^{f}& \bullet^{x}},$$ where $w(e) =1$ and $w(f) =2$. We then have $A_{v,2} =\{f\}$ and that $H :=\{x\}$ is not a saturated subset of $E^0$ (since $v\notin H$ and $\{r(e)\mid e\in A_{v,2}\} = \{x\}= H$).
\end{example}

The following theorem provides a description of the quotient of a weighted Leavitt path algebra by an ideal generated a set of vertices, which is a generalization of \cite[Theorem 2.4.15]{TheBook}.

\begin{theorem}\label{quotientwLPa}
Let $\K$ be a field, $(E,w)$ a row-finite weighted graph, and $H$ a hereditary and saturated subset of $E^0$. Then the following statements hold:

$(1)$  There exists an isomorphism of $\K$-algebras	\[\bar{\phi}: L_{\K}(E, w)/I(H) \longrightarrow L_{\K}(E/H, w_r),\] where $I(H)$ is the ideal of $L_{\K}(E, w)$ generated by $H$.

$(2)$ $I(H) \cap E^0 = H$.
\end{theorem}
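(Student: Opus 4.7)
The plan is to build the isomorphism $\bar\phi$ in part (1) by applying the Universal Property of $L_{\K}(E,w)$ in both directions and then deduce part (2) from part (1) using the linear independence of vertices from Theorem~\ref{GSbasis}.

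First I would define a candidate map $\phi\colon L_{\K}(E,w)\to L_{\K}(E/H,w_r)$ on generators by $v\mapsto v$ and $e_i,e_i^*\mapsto e_i,e_i^*$ when $v\notin H$ and $r(e)\notin H$, and by sending everything with label in $H$ to $0$. To invoke the Universal Property I must verify the four relations of Definition~\ref{weighteddef}. Relations (i), (ii) are immediate from the case-by-case definition. The work is in (iii) and (iv) for a regular vertex $v\in E^0_{\reg}$. If $v\in H$, then hereditariness forces $r(e)\in H$ for every $e\in s^{-1}(v)$, so both sides are sent to $0$. The delicate case is $v\notin H$: here I need the saturation hypothesis to ensure that $v$ is still regular in $E/H$ and, crucially, that $w_r(v)=w(v)$. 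Applied to $i=w(v)$, saturation says $\{r(e):e\in A_{v,w(v)}\}\not\subseteq H$, which provides an edge of maximum weight surviving in $E/H$. Once this is in hand, relation (iii) in $L_{\K}(E/H,w_r)$ reads over the surviving edges only, which is exactly $\phi$ applied to relation~(iii) in $L_{\K}(E,w)$. Relation (iv) is handled analogously: terms indexed by $e$ or $f$ with range in $H$ are killed under $\phi$, and the remaining ones reproduce the relation in the quotient graph algebra.

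Since $\phi(v)=0$ for $v\in H$, we have $I(H)\subseteq\ker\phi$ and $\phi$ descends to a $\K$-algebra homomorphism $\bar\phi\colon L_{\K}(E,w)/I(H)\to L_{\K}(E/H,w_r)$, which is evidently surjective (all generators of the target are hit). To get an inverse I would define $\psi\colon L_{\K}(E/H,w_r)\to L_{\K}(E,w)/I(H)$ on generators by $v\mapsto v+I(H)$, $e_i\mapsto e_i+I(H)$, $e_i^*\mapsto e_i^*+I(H)$. Again the non-trivial check is (iii), (iv). For $v\in (E/H)^0_{\reg}$, using row-finiteness and the equivalence
\[
v\in (E/H)^0_{\reg}\iff v\in E^0_{\reg}\setminus H,
\]
together with $w_r(v)=w(v)$, I split the sum in $L_{\K}(E,w)$ as
\[
\sum_{e\in s^{-1}_E(v)}e_ie_j^{*}=\sum_{e\in s^{-1}_{E/H}(v)}e_ie_j^{*}+\sum_{\substack{e\in s^{-1}_E(v)\\ r(e)\in H}}e_ie_j^{*}.
\]
The second sum lies in $I(H)$ because $e_ie_j^{*}=e_i r(e)e_j^{*}$ with $r(e)\in H$, so the first sum equals $\delta_{ij}v$ modulo $I(H)$; this is exactly $\psi$ applied to relation (iii) in the quotient. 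Relation (iv) works by the same split. Then $\psi\bar\phi$ and $\bar\phi\psi$ are the identity on generators, so $\bar\phi$ is an isomorphism.

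For part (2), the inclusion $H\subseteq I(H)\cap E^0$ is clear. Conversely, if $v\in I(H)\cap E^0$ but $v\notin H$, then $v+I(H)=0$ in $L_{\K}(E,w)/I(H)$, so $\bar\phi(v+I(H))=v=0$ in $L_{\K}(E/H,w_r)$; but $v$ is a normal generalized path of $E/H$, hence nonzero by Theorem~\ref{GSbasis}, a contradiction. The main obstacle I anticipate is the bookkeeping for part (1): verifying that saturation together with row-finiteness is exactly what ensures regularity transfers correctly and $w_r(v)=w(v)$, so that the ranges of the indices $i,j$ in relations (iii) and (iv) match on both sides under $\phi$ and $\psi$.
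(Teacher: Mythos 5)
Your proposal is correct and follows essentially the same route as the paper: define $\phi$ on generators killing $H$, use hereditariness for the $v\in H$ case and saturation for the $v\notin H$ case of relation (iii), descend to $\bar\phi$, build the inverse on generators via the Universal Property, and deduce (2) from the isomorphism together with the linear independence of vertices from Theorem~\ref{GSbasis}. The only difference is cosmetic: you spell out (via the split of $\sum_{e\in s^{-1}_E(v)}e_ie_j^*$ modulo $I(H)$ and the observation $w_r(v)=w(v)$ from saturation at $i=w(v)$) the relation checks for the inverse map that the paper dismisses as straightforward.
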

\begin{proof}
(1) We define the elements $\{p_v\mid v\in E^0\}$ and $\{s_{e_i}, s_{e^*_i}\mid e\in E^1, 1\le i\le w(e)\}$ of $L_{\K}(E/H, w_r)$ by setting:
	\begin{equation*}
p_{v}=  \left\{
\begin{array}{lcl}
v&  & \text{if } v \notin H\\
0&  & \text{otherwise},  
\end{array}%
\right.
\end{equation*}%

\begin{equation*}
s_{e_i}=  \left\{
\begin{array}{lcl}
e_i&  & \text{if } r(e) \notin H\\
0&  & \text{otherwise},  
\end{array}%
\right.
\end{equation*}%
\medskip
and 
\begin{equation*}
s_{e^*_i}=  \left\{
\begin{array}{lcl}
s_{e^*_i}&  & \text{if } r(e) \notin H\\
0&  & \text{otherwise}. 
\end{array}%
\right.
\end{equation*}%
We claim that $\{p_v, s_{e_i}, s_{e^*_i}\mid v\in E^0, e\in E^1\}$ is a family in $L_{\K}(E/H, w_r)$ satisfying the relations analogous to (1) - (4) in Definition~\ref{weighteddef}. For (1) and (2), this is a straightforward computation done by cases, with the only nontrivial situation arising when $e\in (E/H)^1$. But then $r(e)\notin H$, and hence $s_{e_i} - s_{e_i}p_{r(e)} = e_i - e_i r(e) = 0$ and $s_{e^*_i} - p_{r(e)}s_{e^*_i} = e^*_i - r(e)e^*_i  = 0$ 
in $L_{\K}(E/H, w_r)$. Since $r(e)\notin H$ and $H$ is hereditary, we have $s(e)\notin H$, and so $s_{e_i} - p_{s(e)}s_{e_i} = e_i - s(e)e_i = 0$ and $s_{e^*_i} - s_{e^*_i}p_{s(e)} = e^*_i- e^*_i s(e) = 0$ in $L_{\K}(E/H, w_r)$.

For (4), that $\sum_{1\leq i\leq w(v)}s_{e_i^*}s_{f_i}- \delta_{ef}p_{r(e)} =0$ in $L_{\K}(E/H, w_r)$, $\text{ where } v\in E_{\reg}^0 \text{ and } e,f\in s^{-1}(v)$, is straightforward.

For (3), let $v$ be a regular vertex in $E$. For $1\le i\neq j\le w(v)$, that $\sum_{e\in s^{-1}(v)}s_{e_i}s_{e_j^*}  =0$ in $L_{\K}(E/H, w_r)$ is straightforward. For $1\le i\le w(w)$, we show that $\sum_{e\in s^{-1}(v)}s_{e_i}s_{e_i^*} = p_v$ in $L_{\K}(E/H, w_r)$. Consider the following two cases:

{\it Case} $1$: $v\in H$. We then have $p_v =0$ in $L_{\K}(E/H, w_r)$. Since $H$ is hereditary, it follows that $r(e)\in H$ for all $e\in s^{-1}(v)$, and so $\sum_{e\in s^{-1}(v)}s_{e_i}s_{e_i^*} =0 = p_v$ in $L_{\K}(E/H, w_r)$, as desired.

{\it Case} $2$: $v\notin H$. We then have $p_v = e$ in $L_{\K}(E/H, w_r)$. Since $H$ is saturated, there exists an edge $e\in s^{-1}(v)$ such that $w(e)\ge i$ and $r(e)\notin H$, and therefore we have a third relation $L_{\K}(E/H, w_r)$ at $v$:
\[v = \sum_{e\in s_{E/H}^{-1}(v)}e_ie_i^*.\] Consider $e\in E^1$ with $s(e) =v$. If $e\in (E/H)^1$, then $s_{e_i}s_{e^*_i} = e_ie^*_i$. If $e\notin (E/H)^1$, then $s_{e_i}s_{e^*_i} = 0$. Hence we have \[\sum_{e\in s^{-1}(v)}s_{e_i}s_{e_i^*} = \sum_{e\in s_{E/H}^{-1}(v)}s_{e_i}s_{e_i^*} = v = p_v,\] as desired. So by the Universal Property of $L_{\K}(E, w)$, there is a unique $\K$-algebra homomorphism $L_{\K}(E, w) \longrightarrow L_{\K}(E/H, w_r)$ such that $\phi(v) = p_v$ for all $v\in E^0$, and $\phi(e_i) = s_{e_i}$ and $\phi(e^*_i) = s_{e^*_i}$ for all $e\in E^1, 1\le i\le w(e)$. By the definition of $E/H$, it is obvious that $\phi$ is surjective and $I(H)\subseteq \ker(\phi)$.  Consequently, there exists an induced $\K$-algebra homomorphism \[\bar{\phi}: L_{\K}(E, w)/I(H) \longrightarrow L_{\K}(E/H, w_r),\] which is surjective. 

We now define a map $\varphi: L_{\K}(E/H, w_r) \longrightarrow L_{\K}(E, w)/I(H)$ by setting: $\varphi(v) = v + I(H)$ for all $v\in (E/H)^0$, $\varphi(e_i) = e_i + I(H)$ and $\varphi(e^*_i) = e^*_i + I(H)$ for all $e\in (E/H)^1$ and $1\le i\le w_r(e)$. By the Universal Property of $L_{\K}(E/H, w_r)$, $\varphi$ extends to a $\K$-algebra homomorphism. It is then  straightforward to see that $\varphi \bar{\phi}$ gives the identity on the canonical generators, and so $\varphi \bar{\phi} = id_{L_{\K}(E, w)/I(H)}$. This implies that $\bar{\phi}$ is injective, and therefore it is an isomorphism.

(2) It is obvious that $H \subseteq I(H)\cap E^0$. Conversely, let $v\in E^0\setminus H$. Let $\bar{\phi}$ be the $\K$-algebra isomorphism given in item (1). We then have that $\bar{\phi}(v)$ is a nonzero element in $L_{\K}(E/H, w_r)$ by Theorem \ref{GSbasis}, and therefore $v\notin I(H)$, thus finishing the proof.
\end{proof}	

As an application of Theorem \ref{quotientwLPa}, we provide unital $\K$-algebras of type $(m, n)$ ($1< m < n$) but not isomorphic to the Leavitt algebras $L_{\K}(m, n)$.

\begin{example}
Let $(E,w)$ be the weighted graph

$$\xymatrix{\bullet^{v} \ar@(ul,ur) \ar@(lu,ld) \ar@(dl,dr) \ar[r]& \bullet^{u}}$$ 

\bigskip 
\noindent with all the edges have weight two. Consider the weighted Leavitt path algebra $A:= L_\K(E,w)$. We know $\mathcal V(L_\K(E,w)) \cong M_{(E,w)}$, where  by (\ref{meinpeace}),  $M_{(E,w)}= \langle u, v\rangle / \langle 2v=3v+u \rangle $. Since the order unit  order unit $[L_\K(E,w)]$ in $\mathcal V(\mathcal (L_\K(E,w))$ is presented with $u+v$ in $M_{(E,w)}$, a quick computation shows $A^2\cong A^3$, so the typle of $L_\K(E,w)$ is $(2,3)$. Furthermore, considering the (graded) ideal $I(u)$ generated by hereditary and saturated subset $\{u\}$. As an application of Theorem~\ref{quotientwLPa}, we have $$L_\K(E,w)/I(u) \cong L_\K(2,3),$$ which is again of type $(2,3)$. 
	
\end{example}

In the remainder of this section, we describe matrix algebras over weighted Leavitt path algebras. To do so, we need the following useful notion introduced in \cite[Definition 9.1]{AT} for the case of graphs.

\begin{deff} \label{matrixextdef}    Let $(E, w)$ be a row-finite weighted graph and $n$ a positive integer.   Let $(\M_nE, \bar{w})$  be the weighted graph formed from $(E, w)$ by taking each $v\in E^0$  and attaching a ``head" of length $n - 1$ of the form 
	$$\xymatrix{  \bullet^{v_{n-1}} \ar[r]^{e^v_{n-1}} & \cdots\ar[r]^{e_3^v}& \bullet^{v_2} \ar[r]^{e_2^v} & \bullet^{v_1} \ar[r]^{e^v_1} &v }  \ $$
to $E$, where $\bar{w}$ is an extension of $w$ such that $\bar{w}(e^v_i) =1$ for all $v\in E^0$ and $1\le i\le n-1$.
\end{deff}

For clarification, we consider the following example.

\begin{example}\label{matrixextexam}
	If $E$ is the graph
	
	$$\xymatrix{\bullet^{v} \ar@(ul,ur)^e\ar[r]^f& \bullet^{u}}$$ then
	$\M_2E$ is the graph
	
	$$\xymatrix{\bullet^{v_1}\ar[r]^{e^v_1}&
		\bullet^{v} \ar@(ul,ur)^e\ar[r]^f& \bullet^{u}&\bullet^{u_1}\ar[l]_{e^u_1}}.$$ 
\end{example}

We end this section by showing that the matrix algebra over a weighted Leavitt path algebra is again a weighted Leavitt path algebra, which is a generalization of \cite[Proposition 9.3]{AT}.

\begin{theorem}\label{matrixalgeoverwLPA}
Let $(E, w)$ be a row-finite weighted graph, $\K$ a field and $n$ a positive integer. Then $L_{\K}(\M_nE, \bar{w})\cong \M_n(L_{\K}(E, w))$ as $\K$-algebras.
\end{theorem}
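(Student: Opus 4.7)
The plan is to exhibit an explicit isomorphism by defining $\phi : L_{\K}(\M_nE, \bar{w})\longrightarrow \M_n(L_{\K}(E, w))$ via the Universal Property of $L_{\K}(\M_nE, \bar{w})$, and then constructing a two-sided inverse $\psi$ directly, using the attached ``heads'' to manufacture matrix units. Writing $\epsilon_{ij}$ for the standard matrix units of $\M_n(\K)$, the guiding picture is that the path
\[
\alpha_i^v := e^v_{n-i}\,e^v_{n-i-1}\cdots e^v_1 \quad (1\le i\le n-1),\qquad \alpha_n^v := v
\]
in $\M_n E$ should correspond to the element $v\,\epsilon_{i,n}$ in $\M_n(L_{\K}(E, w))$. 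Once $\phi$ is set up so that this correspondence holds, the adjoints $(\alpha_j^u)^*$ produce the ``ghost'' matrix elements $u\,\epsilon_{n,j}$, and an arbitrary $a\,\epsilon_{ij}$ can be recovered as a sandwich $\alpha_i^v\cdot a\cdot (\alpha_j^u)^*$ summed over the vertex support of $a$.

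First I would define $\phi$ on the canonical generators of $L_{\K}(\M_nE, \bar{w})$ by
\[
v \longmapsto v\,\epsilon_{nn},\quad v_k\longmapsto v\,\epsilon_{n-k,\,n-k},\quad e_i\longmapsto e_i\,\epsilon_{nn},\quad e^*_i\longmapsto e^*_i\,\epsilon_{nn},
\]
\[
e^v_k\longmapsto v\,\epsilon_{n-k,\,n-k+1},\qquad (e^v_k)^*\longmapsto v\,\epsilon_{n-k+1,\,n-k}.
\]
Relations (i)--(ii) of Definition~\ref{weighteddef} reduce to routine source/range bookkeeping among the matrix entries. Relations (iii)--(iv) at each original vertex $v\in E^0$ pull out a common $\epsilon_{nn}$ and coincide with the corresponding relations holding in $L_{\K}(E, w)$, while at each head vertex $v_k$ the unique outgoing edge $e^v_k$ has weight one, so both Cuntz--Krieger relations collapse to matrix-unit identities $\epsilon_{n-k,n-k+1}\epsilon_{n-k+1,n-k}=\epsilon_{n-k,n-k}$ and its transpose. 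The Universal Property then yields $\phi$, and a telescoping computation gives $\phi(\alpha_i^v)=v\,\epsilon_{i,n}$ and $\phi((\alpha_j^v)^*)=v\,\epsilon_{n,j}$, which will be used crucially below.

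For the inverse, I would invoke Corollary~\ref{completewsubgraph} to embed $L_{\K}(E, w)\hookrightarrow L_{\K}(\M_n E, \bar{w})$, and then set
\[
\psi(a\,\epsilon_{ij}) := \sum_{v,u\in E^0}\alpha_i^v\cdot (vau)\cdot (\alpha_j^u)^*,
\]
extending $\K$-linearly to all of $\M_n(L_{\K}(E, w))$; for each fixed $a$ only finitely many $(v,u)$ contribute, so the definition makes sense. Two telescoping identities in $L_{\K}(\M_n E, \bar{w})$, obtained by iterated application of the Cuntz--Krieger relations along the heads, form the technical core:
\[
(\alpha_j^u)^*\alpha_k^{v'} = \delta_{jk}\,\delta_{u,v'}\,u, \qquad \alpha_i^v(\alpha_i^v)^* = v_{n-i},
\]
where $v_0:=v$. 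Multiplicativity of $\psi$ follows directly from the first identity together with the finite-sum decomposition $ab=\sum_{u\in E^0}aub$ valid in $L_{\K}(E, w)$. Finally, $\phi\psi=\id$ is immediate from $\phi(\alpha_i^v)=v\,\epsilon_{i,n}$, and $\psi\phi=\id$ is checked on each canonical generator by a short calculation with the same identities; for instance, $\psi\phi(e^v_k)=\alpha_{n-k}^v(\alpha_{n-k+1}^v)^*$ collapses to $e^v_k$ after successive cancellations $e^v_m(e^v_m)^*=v_m$ telescoping inward.

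The principal obstacle is the injectivity of $\phi$, which in this plan is packaged into the well-definedness and multiplicativity of $\psi$. The delicate step is the verification that $\psi(a\,\epsilon_{ij})\psi(b\,\epsilon_{k\ell})=\delta_{jk}\,\psi(ab\,\epsilon_{i\ell})$, which requires simultaneous use of the telescoping identity $(\alpha_j^u)^*\alpha_k^{v'}=\delta_{jk}\delta_{u,v'}u$ and careful handling of the absence of a global unit in $L_{\K}(E, w)$ (since $(E,w)$ is only assumed row-finite). The latter is handled by noting that each element of $L_{\K}(E, w)$ has finite vertex support on both sides, so every sum appearing in the computation is genuinely finite.
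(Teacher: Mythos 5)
Your proposal is correct, and it shares the paper's first step: both arguments use the Universal Property of $L_{\K}(\M_nE,\bar w)$ to produce a homomorphism onto $\M_n(L_{\K}(E,w))$ by sending the original generators into a fixed diagonal corner and the head edges to off-diagonal ``matrix units'' $v\,\epsilon_{k,k+1}$ (the paper anchors $E$ at position $(1,1)$ and you at $(n,n)$, which is immaterial). The two proofs diverge on the harder half. The paper proves surjectivity by exhibiting explicit preimages of $xE_{l,t}$ as products of head paths --- essentially your elements $\alpha_i^v$ --- and then proves injectivity separately by invoking the Gr\"obner--Shirshov basis of normal generalized paths (Theorem \ref{GSbasis}): an element of the kernel is expanded in that basis, each basis word is factored as $\alpha_i q_i\beta_i^*$ with $q_i$ a normal path in $(E,w)$, and comparison of matrix entries forces all coefficients to vanish. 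You instead package both halves into an explicit two-sided inverse $\psi(a\,\epsilon_{ij})=\sum_{v,u}\alpha_i^v(vau)(\alpha_j^u)^*$, with the telescoping identities $(\alpha_j^u)^*\alpha_k^{v'}=\delta_{jk}\delta_{u,v'}u$ and $\alpha_i^v(\alpha_i^v)^*=v_{n-i}$ doing the work; I checked these and the multiplicativity computation (including the local-unit bookkeeping $\sum_u aub=ab$), and they go through. What each approach buys: the paper's route outsources injectivity to the normal-form theorem, so the verification is short once that machinery is accepted; your route is more self-contained --- it needs only the universal property and the canonical map $L_{\K}(E,w)\to L_{\K}(\M_nE,\bar w)$ (not even its injectivity, so the appeal to Corollary \ref{completewsubgraph} can be weakened to the Universal Property alone) --- and it produces the inverse isomorphism explicitly, at the price of the fiddlier well-definedness and multiplicativity checks for $\psi$. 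Both are valid proofs of the theorem.
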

\begin{proof}
We define the elements $\{p_v\mid v\in (\M_nE)^0\}$ and $\{s_{e_i}, s_{e^*_i}\mid e\in (\M_nE)^1, 1\le i\le \bar{w}(e)\}$ of $\M_n(L_{\K}(E, w))$ by setting:	for each $v\in E^0$ and $e\in E^1$ define
\begin{center}
$p_v = vE_{1, 1}$,\quad $s_{e_i} = e_iE_{1, 1}$\quad and\quad $s_{e^*_i} = e^*_iE_{1, 1}$\quad ($1\le i \le w(e)$).	
\end{center} 
Also for $v\in E^0$ and $k\in \{1, 2, \hdots, n-1\}$ define
\begin{center}
	$p_{v_k} = vE_{k+1, k+1}$,\quad $s_{e^v_k} = vE_{k+1, k}$\quad and\quad $s_{(e^v_k)^*} = vE_{k, k+1}$,	
\end{center} 
where we let $x E_{i,j}$ $(x \in L_{\K}(E, w))$ denote the matrix in $\M_n(L_{\K}(E, w))$ with $x$ in the $(i, j)^{\text{th}}$ position and $0$'s elsewhere.

It is straightforward to show that $$\{p_v, p_{v_k} s_{e_i}, s_{e^*_i}, s_{e^v_k}, s_{(e^v_k)^*}\mid v\in E^0, e\in E^1,  1\le k\le n-1, 1\le i\le w(e)\}$$ is a family in $\M_n(L_{\K}(E, w))$ satisfying the relations analogous to (1) - (4) in Definition~\ref{weighteddef}. So by the Universal Property of $L_{\K}(\M_nE, \bar{w})$, there is a unique $\K$-algebra homomorphism $\theta: L_{\K}(\M_nE, \bar{w}) \longrightarrow \M_n(L_{\K}(E, w))$ such that $\theta(v) = p_v$, $\theta(v_k) = p_{v_k}$, $\theta(e_i) = s_{e_i}$, $\phi(e^*_i) = s_{e^*_i}$, $\theta(e^v_k) = s_{e^v_k}$ and $\theta((e^v_k)^*) = s_{(e^v_k)^*}$ for all $v\in E^0$, $e\in E^1$, $1\le k\le n-1$ and $1\le i\le w(e)$.

We claim that $\theta$ is surjective by following the corresponding proof of \cite[Proposition 9.3]{AT}. It is sufficient to show that $vE_{l, t}$, $e_iE_{l, t}$ and $e^*_iE_{l, t}$ are in $\Im(\theta)$ for all $v\in E^0$, $e\in E^1$, $1\le i\le w(e)$ and $1\le l, t\le n$. For $l= t$ we have 
	\begin{equation*}
vE_{l, t}= vE_{l, l} = \left\{
\begin{array}{lcl}
p_v&  & \text{if } l =1\\
p_{v_{i-1}}&  &\text{if } l\ge 2,  
\end{array}%
\right.
\end{equation*}%
for $l\ge t+1$ we have \[vE_{l, t}=(vE_{l, l-1})(vE_{l-1, l-2})\cdots (vE_{t+1, t})= s_{e^v_{l-1}}s_{e^v_{l-2}}\cdots s_{e^v_{t}},\]
for $t\ge l+1$ we have 
\[vE_{l, t}=(vE_{l, l+1})(vE_{l+1, l+2})\cdots (vE_{t-1, t})= s_{(e^v_{l})^*}s_{(e^v_{l+1})^*}\cdots s_{(e^v_{t-1})^*}.\]
For every $e\in E^1$, $1\le i\le w(e)$ and $1\le l, t\le n$ we have
\[e_iE_{l, t}=(s(e)E_{l, 1})(e_iE_{1, 1}) (r(e)E_{1, t})= s_{e^{s(e)}_{l-1}}s_{e^{s(e)}_{l-2}}\cdots s_{e^{s(e)}_{1}}s_{e_i} s_{(e^{r(e)}_{1})^*}s_{(e^{r(e)}_{l+1})^*}\cdots s_{(e^{r(e)}_{t-1})^*}\] 
and \[e^*_iE_{l, t}=(r(e)E_{l, 1})(e^*_iE_{1, 1}) (s(e)E_{1, t})= s_{e^{r(e)}_{l-1}}s_{e^{r(e)}_{l-2}}\cdots s_{e^{r(e)}_{1}}s_{e^*_i} s_{(e^{s(e)}_{1})^*}s_{(e^{s(e)}_{l+1})^*}\cdots s_{(e^{s(e)}_{t-1})^*},\] as desired.

We next show that $\theta$ is injective. Indeed, let $x$ be an arbitrary element of $\ker(\theta)$. By Theorem \ref{GSbasis}, it follows that $x$ can be written uniquely in the form $x = \sum^m_{i=1}k_ip_i$, where $k_i\in \K$ and $p_i$ is a normal generalized path in $(\M_nE, \bar{w})$. We note that for very $v\in E^0$ we have $(e^v_i)^* e^v_j = \delta_{ij}v_{i-1}$ in $L_{\K}(\M_nE, \bar{w})$ for all $1\le i, j\le n-1$, where $v_0 := v$. Therefore, for each $1\le i\le m$ we have $p_i = \alpha_iq_i\beta^*_i$, where $q_i$ is a  normal generalized path in $(E, w)$, and both $\alpha_i$ and $\beta_i$ are paths in $\M_nE$ of the form: either $e^v_{k}\cdots e^v_2e^v_1$ for some $k\ge 1$ and $v\in E^0$ or $v$ for some $v\in E^0$. We then have $x = \sum^m_{i=1}k_i\alpha_i q_i \beta^*_i$ and $$0= \theta(x) = \sum^m_{i=1}k_i\theta(\alpha_i q_i \beta^*_i) = \sum^m_{i=1}k_i\theta(\alpha_i) \theta(q_i) \theta(\beta^*_i).$$
On the other hand, we have 
\begin{center}
$\theta(\alpha_i) = r(\alpha_i)E_{|\alpha_1|+1, 1}$, $\theta(q_i) = q_iE_{1, 1}$ and $\theta(\beta^*_i) = r(\beta_i)E_{1, |\beta_1|+1},$	
\end{center}
so $$\theta(\alpha_i) \theta(q_i) \theta(\beta^*_i)= q_i E_{|\alpha_i+1, |\beta_i|+1} \text{ and } \theta(x) = \sum^m_{i=1}k_iq_i E_{|\alpha_i|+1, |\beta_i|+1} = 0.$$

Write $\{1, 2, \hdots, m\} = V_1\sqcup V_2\sqcup \cdots \sqcup V_t$ such that $(|\alpha_i|, |\beta_i|) = (|\alpha_j|, |\beta_j|)$ for all $i, j\in V_k$ and $1\le k\le m$, and $(|\alpha_i|, |\beta_i|) \neq (|\alpha_j|, |\beta_j|)$ for all $i\in V_k$, $j\in V_l$ and $1\le k\neq l\le m$. We then have \[0=\sum^m_{i=1}k_iq_i E_{|\alpha_i|+1, |\beta_i|+1} = \sum^t_{j=1}(\sum_{i\in V_j}k_iq_i)E_{|\alpha_i|+1, |\beta_i|+1}\] in $\M_n(L_K(E, w))$, and so $\sum_{i\in V_j}k_iq_i = 0$ for all $1\le j\le t$. Since all the elements $q_i$ are normal generalized path in $(E, w)$ and by Theorem \ref{GSbasis}, we must have $k_i = 0$ for all $1\le i \le m$, that means, $x = 0$. This implies that $\theta$ is injective, and so $\theta$ is an isomorphism, thus finishing the proof.
\end{proof}

%\begin{example}\label{weightedgraphmonoidexample}  Let $n,k \in \mathbb{N}^+$.   Let $E_{n,n+k}$ denote the vertex weighted graph having one vertex $v$, with $n$ loops at $v$, each having weight $n+k$.  Pictorially, $E_{n,n+k}$ can be viewed as \begin{equation*}\xymatrix{& \bullet \ar@{.}@(l,d) \ar@(ur,dr)^{e_1; \ w(e_1)=n+k} \ar@(r,d)^{e_2; \ w(e_2)=n+k} \ar@(dr,dl)^{} \ar@(l,u)^{e_n; \ w(e_n)=n+k}& }\end{equation*}So $w(v) = n+k$, and $$M(E_{n,n+k}) :=   \mathbb{F}_{v} \  \Big / \ \langle (n+k)v = nv \rangle \  \cong \ M_{n,n+k},$$where $M_{n,n+k}$ is the monoid defined in Example \ref{Mplusexample}. \end{example}

%\begin{example}\label{Mnn+kissandpile}Let $G$ be the graph pictured here.    $$\xymatrix{\!\!\!    \bullet^s   &   \bullet^x \ar@{.}[l]  \ar@{.}@/_{5pt}/ [l]  \ar@/_{10pt}/ [l]_{f_1}  \ar@/^{10pt}/ [l]^{f_k}     \ar@{.}@(l,d) \ar@(ur,dr)^{e_{1}} \ar@(r,d)^{e_{2}} \ar@(dr,dl)^{e_{3}} \ar@{.}@(l,u) \ar@(u,r)^{e_n}  \ar@{.}@(ul,ur)& }$$So $G$ has one sink $s$, and one non-sink vertex $x$, with $n$ loops based at $x$, and $k$ edges from $x$ to $s$.    Thus $G$ is a sandpile graph.   Moreover,  $$\SP(G) = \mathbb{F}_{x,s} \Big / \langle s = 0; \  (n+k)x = nx + ks \rangle \  \cong M_{n,n+k},$$ where $M_{n,n+k}$ is the monoid defined in Example \ref{Mplusexample}.  \end{example}

\section{Corners of unital weighted Leavitt path algebras}\label{wLpasection}
In this section, we describe the endomorphism ring of a finitely generated progenerator (i.e., finitely generated projective and a generator)
 for the category of left modules over the  weighted Levitt path algebra $L_{\K}(E, w)$ of a finite vertex weighted graph $(E, w)$ (Theorem \ref{maintheo1}). Consequently, this yields that for every full idempotent $\epsilon$ in $L_{\K}(E, w)$, there is a positive integer $n$ such that $\M_n(\epsilon L_{\K}(E, w) \epsilon)$ is a weighted Leavitt path algebra (Corollary \ref{CornerofwLPAs}). Also, we show that a corner of a weighted Leavitt path algebra is, in general, not a weighted Leavitt path algebra (Example \ref{CornerofwLPAsexample}). Finally, we describe unital algebras and sandpile algebras that are Morita equivalent to weighted Leavitt path algebras (Theorem \ref{maintheo2} and Corollary \ref{sandpilealgebra}).\medskip

We begin this section by describing hereditary saturated closures of subsets of vertices of of a vertex weighted graph.
Let $(E, w)$ be a row-finite weighted graph and $X$ a nonempty subset of $E^0$. Let $H(X) := r(s^{-1}(X))$ and $S(X) := \{v\in E_{\reg}^0\mid r(s^{-1}(v))\subseteq X\}$. We now define $G_0 := X$ and  for $n\ge 0$ we define inductively $G_{n+1} := H(G_n)\cup S(G_n)\cup G_n$, and we define $\overline{X}= \bigcup_{n\ge 0}G_n$.

\begin{lemma}\label{h-sclosure}
Let $(E, w)$ be a vertex weighted graph and $X$ a nonempty subset of $E^0$. Then $\overline{X}$ is the smallest hereditary and saturated subset of $E^0$ containing $X$. 
\end{lemma}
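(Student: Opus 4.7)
The plan is to verify that $\overline{X}$ has the required properties in the order (i) contains $X$, (ii) hereditary, (iii) saturated, and then (iv) show minimality by induction. The key simplification is that $(E,w)$ is a vertex weighted graph, so by Remark \ref{satusetrem} saturation reduces to the condition $r(s^{-1}(v))\subseteq H \Rightarrow v\in H$ for $v\in E_{\reg}^0$, which matches exactly the role of the operator $S(\cdot)$ in the construction of the $G_n$'s.

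For the hereditary property, given $e \in E^1$ with $s(e) \in \overline{X}$, I would pick the minimal $n$ with $s(e) \in G_n$ and observe $r(e) \in r(s^{-1}(G_n)) = H(G_n) \subseteq G_{n+1} \subseteq \overline{X}$. For saturation, suppose $v \in E_{\reg}^0$ satisfies $r(s^{-1}(v)) \subseteq \overline{X}$. Since $(E,w)$ is row-finite, the set $s^{-1}(v)$ is finite, so its finitely many range vertices each lie in some $G_{n_i}$; taking $n := \max n_i$ gives $r(s^{-1}(v)) \subseteq G_n$, whence $v \in S(G_n) \subseteq G_{n+1} \subseteq \overline{X}$. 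The invocation of row-finiteness here is the one spot that deserves attention: without it the finite union $\bigcup G_n$ would not be guaranteed to absorb saturation candidates, so this is where the hypothesis genuinely enters.

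For minimality, let $H$ be any hereditary and saturated subset of $E^0$ with $X \subseteq H$. I would prove $G_n \subseteq H$ by induction on $n$. The base case $G_0 = X \subseteq H$ is given. For the inductive step, assuming $G_n \subseteq H$, hereditariness of $H$ gives $H(G_n) = r(s^{-1}(G_n)) \subseteq H$, and saturation of $H$ (again via Remark \ref{satusetrem}) applied to each $v \in S(G_n)$ gives $S(G_n) \subseteq H$; combined with $G_n \subseteq H$ this yields $G_{n+1} \subseteq H$. Passing to the union, $\overline{X} \subseteq H$.

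I do not anticipate a serious obstacle here; the statement is the standard hereditary-saturated closure construction adapted to the vertex-weighted setting, and the only subtlety is ensuring that saturation on the union $\bigcup_n G_n$ can be ``pushed down'' to a single $G_n$, which is exactly what row-finiteness provides.
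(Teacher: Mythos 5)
Your proof is correct and follows essentially the same route as the paper, which simply invokes Remark \ref{satusetrem} and refers to the standard closure argument of \cite[Lemma 2.0.7]{TheBook} that you have written out in full (hereditariness via $H(G_n)$, saturation via row-finiteness pushing $r(s^{-1}(v))$ into a single $G_n$, minimality by induction on $n$). No gaps.
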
	
\begin{proof}
By Remark \ref{satusetrem} and repeating the proof of \cite[Lemma 2.0.7]{TheBook} we immediately obtain the statement.	
\end{proof}	

The following important proposition gives a description of projective and finitely generated generators for the category of left modules over the weighted Leavitt path algebra of a finite vertex weighted graph.

Recall from Theorem~\ref{wgraphmonoid} that for any nonzero finitely generated projective left $L_{\K}(E, w)$-module $P$, there exists a subset of (distinct) vertices $W$ of $E^0$ and positive integers $\{n_w\mid w\in W\}$ for which $P\cong \bigoplus_{w\in W}(L_{\K}(E, w)w)^{n_w}$. The next proposition guarantees that there is an positive integer $n$ such that all the vertices of $\overline{W}$ appear in the presentation of $P^n$. Furthermore, if $P$ is  a generator, then $\overline{W}$ is the same to $E^0$.

\begin{prop} \label{progeneratorprop}
Let $(E, w)$ be a finite vertex weighted graph, $\K$ a field, and   $P$ a nonzero finitely generated projective left $L_{\K}(E, w)$-module. Then there exist a saturated hereditary subset $H$ of $E^0$ and positive integers $n$, $n_v $ $(v\in H)$ such that 
$$P^n\ \cong\ \bigoplus_{v\in H} \  (L_{\K}(E, w)v)^{n_v}.$$
Moreover, if $P$ is a generator for the category $L_{\K}(E, w) \Mod$ of left $L_{\K}(E, w)$-modules, then $H = E^0$.
\end{prop}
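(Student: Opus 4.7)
The plan is to invoke Theorem~\ref{wgraphmonoid}(2) to present $P$ as $P \cong \bigoplus_{w \in W}(L_{\K}(E, w)w)^{n_w}$ for some subset $W \subseteq E^0$ and positive integers $n_w$, and then to take $H := \overline{W}$, the hereditary and saturated closure of $W$ furnished by Lemma~\ref{h-sclosure}. Translating back through Theorem~\ref{wgraphmonoid}, the first assertion reduces to finding a positive integer $n$ for which the monoid element $n[P]$ admits an expression $\sum_{v \in H}n_v [v]$ in $M_{(E, w)}$ with every $n_v > 0$.

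The existence of such an $n$ will be proved by induction on the filtration $G_0 \subseteq G_1 \subseteq \cdots$ that defines $\overline{W}$. The induction hypothesis at stage $k$ is that some positive multiple $N_k[P]$ is equal in $M_{(E, w)}$ to a sum $\sum_{v \in H}b_v^{(k)}[v]$ in which $b_v^{(k)} > 0$ for every $v \in G_k$; the base case $k=0$ is immediate from $W = G_0$. To pass from $G_k$ to $G_{k+1} = G_k \cup H(G_k) \cup S(G_k)$ I would perform two kinds of rewrites, both built from the defining relation $w(v)[v] = \sum_{e \in s^{-1}(v)}[r(e)]$ for a regular $v$. First, after multiplying the equation through by an integer large enough that every $b^{(k)}_v$ becomes strictly greater than $w(v)$, a single forward substitution $N[v] \mapsto (N - w(v))[v] + \sum_{e \in s^{-1}(v)}[r(e)]$ applied at every regular $v \in G_k$ produces positive coefficients on all of $H(G_k)$ while preserving positivity on $G_k$. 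Second, after another sufficiently large multiplication, a reverse substitution $\sum_{e \in s^{-1}(u)}[r(e)] \mapsto w(u)[u]$ for each $u \in S(G_k)$ (whose ranges all lie in $G_k$ and hence already carry positive coefficient) introduces a positive coefficient on $[u]$ without emptying any of the $[r(e)]$'s. Since $E^0$ is finite the chain $G_0 \subseteq G_1 \subseteq \cdots$ stabilizes at $H = \overline{W}$ in finitely many steps, delivering the desired $n$ and $n_v$'s.

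For the moreover part I would argue by contradiction: suppose $P$ is a generator but $H = \overline{W} \subsetneq E^0$. Theorem~\ref{quotientwLPa} then yields an isomorphism $L_{\K}(E, w)/I(H) \cong L_{\K}(E/H, w_r)$, and part~(2) of that theorem, $I(H)\cap E^0 = H$, implies the quotient is nonzero. On the other hand, every $w \in W$ lies in $H \subseteq I(H)$, so the idempotent $w$ annihilates $L_{\K}(E, w)/I(H)$, giving
\[\Hom_{L_{\K}(E, w)}\!\bigl(L_{\K}(E, w)w,\, L_{\K}(E, w)/I(H)\bigr) \;\cong\; w\bigl(L_{\K}(E, w)/I(H)\bigr) \;=\; 0\]
for every $w \in W$, and hence $\Hom_{L_{\K}(E, w)}(P, L_{\K}(E, w)/I(H)) = 0$. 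Since $P$ is a generator the functor $\Hom_{L_{\K}(E,w)}(P,-)$ is faithful, forcing $L_{\K}(E, w)/I(H) = 0$; this is the desired contradiction, so $H = E^0$.

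The most delicate step will be the bookkeeping in the inductive construction: one must choose the multipliers $M$ large enough that the simultaneous (or sequential) reverse substitutions for all $u \in S(G_k)$ do not exhaust the positive coefficient on some range vertex $r(e)$ that happens to be shared between several such $u$'s, and at the same time large enough that every $b_v^{(k)}$ exceeds the relevant weight $w(v)$ before the forward substitutions are applied. Quantifying these demands simultaneously at each inductive step is the main technical obstacle.
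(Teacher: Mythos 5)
Your proposal is correct and follows essentially the same route as the paper: decompose $P$ via Theorem~\ref{wgraphmonoid}(2), pass to the hereditary saturated closure of Lemma~\ref{h-sclosure}, and climb the filtration $G_0\subseteq G_1\subseteq\cdots$ by forward and reverse applications of the relation $w(v)[v]=\sum_{e\in s^{-1}(v)}[r(e)]$ after multiplying by a sufficiently large integer (the bookkeeping you flag is handled in the paper in the same ad hoc way, e.g.\ by doubling $n'$ when some coefficient equals $1$, and is unproblematic since $E$ is finite). Your argument for the ``moreover'' part is a mild variant --- you deduce $L_{\K}(E,w)/I(H)=0$ from faithfulness of $\Hom_{L_{\K}(E,w)}(P,-)$, whereas the paper shows directly via a split epimorphism $P^{sn}\to L_{\K}(E,w)u$ that every vertex lies in the trace ideal $I(\overline{X})$ and then applies Theorem~\ref{quotientwLPa}(2) --- but both rest on the same ingredients and are equivalent.
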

\begin{proof}
Since $P$ is a finitely generated projective left $L_{\K}(E, w)$-modules and by Theorem~\ref{wgraphmonoid}(2), there exist a nonempty subset $X\subseteq E^0$ and positive integers $m_v $ $(v\in X)$ such that   
%So by replacing appropriate summands, we have \begin{equation*}Q \cong \bigoplus_{w \in F^0} m'_w L_K(E) w  \tag{\mbox{$\ast$}}\end{equation*}for some non-negative integers $m'_w$.  Clearly we can eliminate any summand for which $m'_w = 0$.  Denote by $T_1$  the set of remaining vertices (i.e., the set of vertices $w$ in $F^0$ for which $m'_w \geq 1$ in ($\ast$)).  So

\begin{equation*}P \cong \bigoplus_{v \in X}(L_{\K}(E, w)v)^{m_v}.  \tag{\mbox{$\ast$}}\end{equation*}
	
Let $\overline{X}$ be the smallest hereditary and saturated subset of $E^0$ containing $X$.    We claim that 
$$P^n \cong \bigoplus_{v\in \overline{X}} (L_{\K}(E, w)v)^{n_v},$$ 
where $n\ge 1$ and each $n_v \geq 1$.  Since $E^0$ is finite,  by Lemma \ref{h-sclosure}, it follows that $\overline{X}= G_{n_0}$ for some $n_0\ge 1$, where $G_0 = X$ and $G_{n} = H(G_{n-1}) \cup S(G_{n-1}) \cup G_{n-1}$ for all $n\ge 1$. For $z\in H(X)\setminus X$, there exist a vertex $u\in X$ and an edge $f\in s^{-1}(u)$ such that $z = r(f)$. Let $k_v$ be the smallest positive integer such that $k_um_u \ge w(u) +1$. We then have
\begin{equation*}P^{k_u} \cong \bigoplus_{v \in X}(L_{\K}(E, w)v)^{k_um_v}\cong (L_{\K}(E, w)u)^{k_um_u} \oplus\bigoplus_{v\in X \setminus \{u\}}(L_{\K}(E, w)v)^{k_um_v}.  \tag{\mbox{$\ast\ast$}}\end{equation*}
By Theorem \ref{wgraphmonoid}(1), it follows that
\begin{equation*}(L_{\K}(E, w)u)^{w(u)}\cong \bigoplus_{e\in s^{-1}(u)}L_{\K}(E, w)r(e)\cong L_{\K}(E, w)z \oplus \bigoplus_{e\in s^{-1}(u) \setminus \{f\}}L_{\K}(E, w)r(e),  \tag{\mbox{$\dagger$}}\end{equation*}
where $r(e)\in H(X)$ for all $e\in s^{-1}(u) \setminus \{f\}$.
\noindent
Now replace any one of the summands isomorphic to $(L_{\K}(E, w))^{w(u)}$ which appears in the  decomposition ($\ast$$\ast$)  of $P^{k_u}$  by the isomorphic version of $(L_{\K}(E, w))^{w(u)}$ given in $(\dagger)$; we then arrive at  a direct sum decomposition of $P^{k_u}$ which includes a summand isomorphic to $L_{\K}(E, w)z$, and which has not disappeared any given $L_{\K}(E, w)v$ which appeared in  decomposition ($\ast$$\ast$) of $P^{k_u}$. Continuing this same process now on the summand $L_K(E)x$ with $x\in H(X)\setminus (X \cup \{z\})$, we see that after at most $|H(X)\setminus X|$ steps we obtain that
\begin{equation*}P^{n'} \cong \bigoplus_{v \in H(X)\cup X}(L_{\K}(E, w)v)^{m'_v} \tag{\mbox{$\ast\ast\ast$}}\end{equation*}
where $n'\ge 1$ and each $m'_v\ge 1$. 

For $z\in S(X)$, we have $r(e)\in X$ for all $e\in s^{-1}(z)$, and 

\begin{equation*}
(L_{\K}(E, w)z)^{w(z)}\cong \bigoplus_{e\in s^{-1}(z)}L_{\K}(E, w)r(e)\quad\quad (\text{by Theorem } \ref{wgraphmonoid}(1)). 
\end{equation*}	
If $m'_{r(e)} =1$ for some $e\in s^{-1}(z)$, then we replace $P^{n'}$ by $P^{2n'}\cong \bigoplus_{v \in H(X)\cup X}(L_{\K}(E, w)v)^{2m'_v}$. Therefore, we may assume that $m'_{r(e)}\ge 2$ for all $e\in s^{-1}(z)$. Now replacing any one of the summands isomorphic to $\bigoplus_{e\in s^{-1}(z)}L_{\K}(E, w)r(e)$ which appears in the  decomposition ($\ast$$\ast$$\ast$)  of $P^{n'}$  by  $(L_{\K}(E, w)z)^{w(z)}$, we arrive at a direct sum decomposition of $P^{n'}$ which includes a summand isomorphic to $(L_{\K}(E, w)z)^{w(z)}$, and which has not disappeared any given $L_{\K}(E, w)v$ which appeared in  decomposition ($\ast$$\ast$$\ast$) of $P^{n'}$. Continuing this same process now on the summand $L_K(E)u$ with $u\in S(X)\setminus \{z\}$, we see that after at most $|S(X)|$ steps we obtain that
\begin{equation*}P^{n''} \cong \bigoplus_{v \in H(X)\cup S(X)\cup X}(L_{\K}(E, w)v)^{m''_v}= \bigoplus_{v\in G_1}(L_{\K}(E, w)v)^{m''_v} \end{equation*}
where $n''\ge 1$ and each $m''_v\ge 1$. 

Continuing this same process now on the set $G_1$, we see that after $n_0$ steps we obtain that $$P^n \cong \bigoplus_{v\in G_{n_0}} (L_{\K}(E, w)v)^{n_v}= \bigoplus_{v\in \overline{X}} (L_{\K}(E, w)v)^{n_v},$$ 
where $n\ge 1$ and each $n_v \geq 1$, showing the claim.  
  
Let $u$ be an arbitrary vertex of $E^0$. Since $P$ is a generator for $L_{\K}(E, w) \Mod$, $P^n$ is also a generator for $L_{\K}(E, w) \Mod$. Then, there exist a positive integer $s$  a split epimorphism $P^{sn} \to L_{\K}(E, w)u \to 0$; so there are maps $\varphi \in {\rm Hom}_{L_{\K}(E, w)}(P^{sn}, L_{\K}(E, w)u)$ and $\psi \in {\rm Hom}_{L_{\K}(E, w)}(L_{\K}(E, w)u, P^{sn})$ for which $\varphi \psi $ is the identity map on $L_{\K}(E, w)u$. In particular, we have  $u = \varphi\psi(u)$. 

We should note that $\Hom_{L_{\K}(E, w)}(L_{\K}(E, w)x, L_{\K}(E, w)y)\cong xL_{\K}(E, w)y$ as $\K$-vector spaces for all $x, y\in E^0$ (by \cite[Proposition 1.5(1)]{an:colpaofgalpa}). Using this, 
and the standard decomposition of maps to and from finite direct sums, the equation $u = \varphi\psi(u)$ yields elements $r_{v,j}$ and $r'_{v,j}$ in $L_{\K}(E, w)$, with $v\in \overline{X}$ and $1 \leq j \leq s\cdot n_v$, for which 
$$u =  \sum_{v\in \overline{X}} \sum_{j=1}^{s\cdot n_v} u r_{v,j} v r'_{v,j} u .$$

Since each $v$ is in $\overline{X}$, each term $u r_{v,j} v r'_{v,j} u$ is in the ideal $I(\overline{X})$ of $L_{\K}(E, w)$ generated by $\overline{X}$, and so $u \in I(\overline{X})$. This implies that $E^0\subseteq I(\overline{X})$. Then, by Theorem \ref{quotientwLPa}(2), it follows that $E^0 \subseteq I(\overline{X}) \cap E^0 = \overline{X}$, and therefore $\overline{X} = E^0$, thus finishing the proof.
\end{proof} 

The following example will help illuminate the ideas of Proposition \ref{progeneratorprop}.

\begin{example}\label{progeneratorexam} Let $\K$ be a field and $(E, w)$ the following weighted graph
$$E \ \ \ = \ \ \  \xymatrix{  \bullet^{v_1}\ar@(ul,ur)^{e}\ar[r]^{f}&  \bullet^{v_2}\ar@(ul,ur)^{g}\ar[r]^{h}&\bullet^{v_3}\\ \bullet^{v_4}\ar[u]^{x}\ar[ur]_{y}},$$ where all the edges have weight $2$. We then have that $(E, w)$ is a vertex weighted graph. Consider the finitely generated projective left $L_{\K}(E, w)$-module $P = L_{\K}(E, w)v_1$ and $I := L_{\K}(E, w)v_1L_{\K}(E, w)$. Since $v_1\in I$, it follows that $f_1 = v_1f_1\in I$ and $f_2 =v_1f_2\in I$, and so $v_2 = f^*_1f_1 + f^*_2f_2 \in I$, $v_3 = h^*_1h_1 + h^*_2h_2= h^*_1v_2h_1 + h^*_2v_2h_2\in I$, and $v_4 = x_1x^*_1 + y_1y^*_1 = x_1v_1x^*_1 + y_1v_2y^*_1\in I$. Hence we have $I = L_{\K}(E, w)$, and therefore $v_1$ is a full idempotent in $L_{\K}(E, w)$, that means, $P$ is a generator for  $L_{\K}(E, w) \Mod$.

We have $P^2 \cong L_{\K}(E, w)v_1\oplus L_{\K}(E, w)v_2$ and $(L_{\K}(E, w)v_2)^2\cong L_{\K}(E, w)v_2 \oplus L_{\K}(E, w)v_3$, and so \[P^4\cong (L_{\K}(E, w)v_1)^2\oplus (L_{\K}(E, w)v_2)^2\cong (L_{\K}(E, w)v_1)^2\oplus L_{\K}(E, w)v_2\oplus L_{\K}(E, w)v_3.\] Since $(L_{\K}(E, w)v_4)^2\cong L_{\K}(E, w)v_1\oplus L_{\K}(E, w)v_2$, we have 
\begin{align*}
P^8&\cong  ((L_{\K}(E, w)v_1)^4\oplus (L_{\K}(E, w)v_2)^2\oplus (L_{\K}(E, w)v_3)^2\\
& \cong (L_{\K}(E, w)v_1)^3\oplus L_{\K}(E, w)v_2\oplus (L_{\K}(E, w)v_3)^2\oplus (L_{\K}(E, w)v_4)^2.
\end{align*}  
\end{example}

In order to describe corners of weighted Leavitt path algebras we need the following very useful notion introduced in \cite[Definition 3.3]{an:colpaofgalpa} for the case of finite graphs.

\begin{deff}(Hair extension of a weighted graph) \label{hairdef}    Let $(E, w)$ be a finite weighted graph with $E^0 = \{v_1, v_2, \dots, v_t\}$.   Let $n_1, n_2, \dots, n_t$ be a sequence of positive integers.  The {\it hair extension} $(E^+(n_1, n_2, \dots, n_t), w^+)$ of $(E, w)$ is the weighted graph formed form $(E, w)$ by adding these vertices and edges  to $E$:  
$$\xymatrix{  \bullet^{v_i^{n_i-1}} \ar[r]^{e_i^{n_i-1}} & \cdots \bullet^{v_i^2} \ar[r]^{e_i^2} & \bullet^{v_i^1} \ar[r]^{e_i^1} & }  \ $$ where  $r(e_i^1)=v_i$, and $w^+$ is an extension of $w$ such that $\bar{w}(e^j_i) =1$ for all $1\le i\le t$ and $1\le j\le n_i-1$. %If the sequence $n_1, n_2, \dots, n_t$ is understood from context, we will denote  $E^+(n_1, n_2, \dots, n_t)$ simply by $E^+$. 
\end{deff}

%\begin{rem}\label{haircompleterem}By construction,  $E$ is a complete subgraph of  any hair extension $E^+(n_1, n_2, \dots, n_t)$, so that by Proposition \ref{Lpaproperties}(3)  we may view $L_K(E)$ as a $K$-subalgebra of $L_K(E^+(n_1, n_2, \dots, n_t))$.\end{rem}

For clarification, we consider the following example.
\begin{example}\label{hairex}
If $E$ is the graph
	
$$\xymatrix{\bullet^{v_1} \ar@(ul,ur)\ar[r]& \bullet^{v_2}}$$ then
$E^+(3,2)$ is the graph
$$\xymatrix{\bullet^{v_1^2}\ar[r]& \bullet^{v_1^1}\ar[r]&
		\bullet^{v_1} \ar@(ul,ur)\ar[r]& \bullet^{v_2}&\bullet^{v_2^1}\ar[l]}$$ 
\end{example}

It is worth mentioning the following note.

\begin{remark}\label{hair-ext-subal}
Let $(E, w)$ be a weighted graph, $(E^+, w^+)$ a hair extension of $(E, w)$ and $\K$ a field. Then the map $L_{\K}(E, w)\longrightarrow L_{\K}(E^+, w^+)$, defined by $v\longmapsto v$ for all $v\in E^0$,  $e_i\longmapsto e_i$ and $e^*_i\longmapsto e^*_i$ for all $e\in E^1$ and $1\le i\le w(e)$, is an injective homomorphism of $\K$-algebras.
\end{remark}
\begin{proof}
The statement immediately follows from Corollary \ref{completewsubgraph}, as $(E, w)$ is a complete weighted subgraph of $(E^+, w^+)$. 	
\end{proof}	

We are now in a position to give the first main result of this article, describing the endomorphism ring of projective and finitely generated generators for the category of left $L_{\K}(E, w)$-modules.

\begin{theorem} \label{maintheo1}
Let $(E, w)$ be a finite vertex weighted graph, $\K$ a field, and   $P$ a projective and finitely generated generator for the category $L_{\K}(E, w) \Mod$ of left $L_{\K}(E, w)$-modules. Then there exist a positive integer $n$ and a hair extension $(E^+, w^+)$ of $(E, w)$ such that 
	\[\M_n(\textnormal{End}_{L_{\K}(E, w)}(P))\ \cong\ L_{\K}(E^+, w^+).\]	
\end{theorem}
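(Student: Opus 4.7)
The plan is to realize $\M_n(\End_{L_{\K}(E, w)}(P))$ as the endomorphism ring of $P^n$, and then identify it via Morita equivalence with $L_{\K}(E^+, w^+)$ for a carefully chosen hair extension. By Proposition~\ref{progeneratorprop} applied to the generator $P$, there exist a positive integer $n$ and positive integers $\{n_v : v \in E^0\}$ such that $P^n \cong \bigoplus_{v \in E^0}(L_{\K}(E, w)v)^{n_v}$. Enumerate $E^0 = \{v_1, \ldots, v_t\}$ and form the hair extension $(E^+, w^+) := (E^+(n_{v_1}, \ldots, n_{v_t}), w^+)$ from Definition~\ref{hairdef}; write $v_i^k$ for its hair vertices and $e_i^k$ for the hair edges (for $1 \le k \le n_{v_i} - 1$, with $s(e_i^k) = v_i^k$, $r(e_i^k) = v_i^{k-1}$, $v_i^0 := v_i$, all weights equal to one). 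Set $A := L_{\K}(E^+, w^+)$ and $\epsilon := \sum_{i=1}^t v_i \in A$.

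The key step is to prove that $\epsilon$ is a full idempotent in $A$ with $\epsilon A \epsilon \cong L_{\K}(E, w)$, and that $\epsilon A \cong P^n$ as left $L_{\K}(E, w)$-modules. Fullness follows from the identity $v_i^k = (e_i^k \cdots e_i^1)\, v_i\, (e_i^1)^* \cdots (e_i^k)^*$, obtained by iterating the weighted Leavitt relation $e_i^j(e_i^j)^* = v_i^j$ (valid since $v_i^j$ emits a single edge of weight one). For the corner, Remark~\ref{hair-ext-subal} embeds $L_{\K}(E, w)$ into $\epsilon A \epsilon$; the reverse inclusion I would derive from Theorem~\ref{GSbasis}, arguing that no normal generalised path in $(E^+, w^+)$ connecting two vertices of $E^0$ can contain a hair character. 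Once such a path enters the hair through $(e_k^1)^*$ and reaches $v_k^l$, its only legal continuations are the deeper dual $(e_k^{l+1})^*$ or the edge $e_k^l$, but the latter creates the forbidden subword $(e_k^l)^* e_k^l$; the former option pushes further and eventually exhausts itself at the hair sink $v_k^{n_k - 1}$. Hence $v_i A v_j = v_i L_{\K}(E, w) v_j$ for all $i, j$, giving the required corner isomorphism.

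For $\epsilon A \cong P^n$, I decompose $\epsilon A = \bigoplus_{p \in (E^+)^0} \epsilon A p$. The argument just outlined yields $\epsilon A v_i = L_{\K}(E, w) v_i$. For a hair vertex $v_i^k$, mutually inverse right multiplications by $(e_i^1)^* \cdots (e_i^k)^*$ and by $e_i^k \cdots e_i^1$ (both verified by telescoping using $e_i^j(e_i^j)^* = v_i^j$ and $(e_i^j)^* e_i^j = v_i^{j-1}$) produce a left-$A$-module isomorphism $A v_i \cong A v_i^k$; restricting gives $\epsilon A v_i^k \cong L_{\K}(E, w) v_i$ as left $L_{\K}(E, w)$-modules. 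Summing yields $\epsilon A \cong \bigoplus_{i=1}^t (L_{\K}(E, w) v_i)^{n_{v_i}} \cong P^n$. Standard Morita theory, applied to the full idempotent $\epsilon \in A$, then delivers
\[\M_n(\End_{L_{\K}(E, w)}(P)) \;\cong\; \End_{L_{\K}(E, w)}(P^n) \;\cong\; \End_{L_{\K}(E, w)}(\epsilon A) \;\cong\; \End_A(A) \;\cong\; A^{\op},\]
and the canonical involution on $A = L_{\K}(E^+, w^+)$ converts $A^{\op}$ to $A$, completing the proof.

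The main obstacle is the Gröbner--Shirshov analysis needed for the corner identification $\epsilon A \epsilon = L_{\K}(E, w)$: one must exclude that a normal generalised path linking two vertices of $E^0$ successfully detours through the hair. The weight-one feature of hair edges, combined with the forbidden-subword structure of Theorem~\ref{GSbasis}, is precisely what blocks such detours. Once this is secured, the telescoping identities at the hair and the final Morita step are routine.
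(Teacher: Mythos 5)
Your proposal is correct, and it rests on the same skeleton as the paper's proof: the same application of Proposition~\ref{progeneratorprop} to write $P^n\cong\bigoplus_{v\in E^0}(L_{\K}(E,w)v)^{n_v}$, the same hair extension $E^+(n_{v_1},\dots,n_{v_t})$, the same telescoping identities $p_i^y(p_i^y)^*=v_i^y$ and $(p_i^y)^*p_i^y=v_i$, and the same combinatorial fact that no normal generalised path joining two vertices of $E^0$ can detour through a hair (the paper uses this implicitly when it asserts that $s'=(p_i^y)^*v_i^y s v_j^z p_j^z$ ``may be viewed as an element of $L_{\K}(E,w)$''; your Gr\"obner--Shirshov argument via the forbidden subwords $e_i^k(e_i^k)^*$ and $(e_i^k)^*e_i^k$ makes it explicit and is a genuine improvement in rigour). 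Where you diverge is the final identification: the paper writes both $\End_{L_{\K}(E,w)}(P^n)$ and $L_{\K}(E^+,w^+)$ as $\sigma\times\sigma$ block matrix rings with entries in the spaces $v_iL_{\K}(E,w)v_j$ and builds the isomorphism componentwise via $r\mapsto p_i^y r (p_j^z)^*$, checking multiplicativity by hand; you instead prove $\epsilon:=\sum_i v_i$ is a full idempotent with $\epsilon L_{\K}(E^+,w^+)\epsilon\cong L_{\K}(E,w)$ and $\epsilon L_{\K}(E^+,w^+)\cong P^n$, and then invoke Morita theory to get $\End(\epsilon A)\cong\End_A(A)\cong A^{\op}\cong A$. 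Your route is cleaner at the end and correctly spots the $(-)^{\op}$ twist that the abstract argument forces, resolving it with the canonical involution on $L_{\K}(E^+,w^+)$; the paper's explicit construction sidesteps Morita machinery entirely at the cost of the componentwise verification. One cosmetic slip: $v_k^{n_k-1}$ is a \emph{source} of $E^+$, not a sink; what matters for your path argument is only that it receives no edges, so there is no $(e_k^{n_k})^*$ with which to continue, and the argument stands.
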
 
\begin{proof}
By Proposition \ref{progeneratorprop},  we have $$P^n \cong \bigoplus_{v\in E^0} (L_{\K}(E, w)v)^{n_v},$$
where $n\ge 1$ and each $n_v \geq 1$.   Write $E^0 = \{v_1, v_2, \dots , v_t\}$.  Note that there are $\sigma = \sum_{v\in E^0} n_v$ direct summands in the decomposition. By \cite[Proposition 1.5]{an:colpaofgalpa}, $\M_n(\textnormal{End}_{L_{\K}(E, w)}(P))\ \cong \text{End}_{L_{\K}(E, w)}(P^n)$ is isomorphic to a $\sigma \times \sigma$ matrix ring, with entries described as follows.  The indicated matrices   may be viewed as consisting of rectangular blocks of size $n_{v_i} \times n_{v_j}$, where, for $1 \leq i \leq t$, $1\leq j \leq t$, the entries of the $(i,j)$ block are elements of the $K$-vector space   $v_i L_{\K}(E, w) v_j$.    

On the other hand, because $n_{v_i} \geq 1$ for all $1\leq i \leq t$, we may construct  the hair extension $E^+ = E^+(n_{v_1}, n_{v_2}, \dots , n_{v_u})$ of $(E, w)$.    For each $1 \leq i \leq t$, and each $1\leq y \leq n_{v_i}$,  let $p_i^y:= e_i^y \cdots e_i^1$ denote the (unique) path in $E^+$ having  $s(p_i^y) = v_i^y$, and $r(p_i^y) = v_i$.    Note that $p_i^y (p_i^y)^* = v_i^y$ in $L_{\K}(E^+, w^+)$ and $|(E^+)^0| =   \sum_{1\leq i \leq u} n_{v_i}=\sigma$. Since $1_{L_{\K}(E^+, w^+)} = \sum_{v\in (E^+)^0}v$
and $vu= \delta_{vu}v$ in $L_{\K}(E^+, w^+)$ for all $v, u\in (E^+)^0$, it follows that $L_{\K}(E^+, w^+) = \oplus_{v\in (E^+)^0} L_{\K}(E^+, w^+)v$. By \cite[Proposition 1.5]{an:colpaofgalpa}, we obtain that $L_{\K}(E^+, w^+)$ is isomorphic to the $\sigma \times \sigma$ matrix ring with entries described as follows.   For $1 \leq i, j \leq t$, and $0 \leq y \leq n_{v_i}-1$, $0 \leq z \leq n_{v_j}-1,$ the entries in the row indexed by $(n_{v_i}, y)$ and column indexed by $(n_{v_j}, z)$ are elements of $v_i^y L_{\K}(E^+, w^+) v_j^z$, where $v^0_i := v_i$.   

We now show that these two $\sigma \times \sigma$ matrix rings are isomorphic as $\K$-algebras.  To do so, we show first that for each pair $(n_{v_i}, y)$,  $(n_{v_j}, z)$  with $1 \leq i, j \leq t$, and $1 \leq y \leq n_{v_i}-1$, $1 \leq z \leq n_{v_j}-1$, there is a $\K$-vector space isomorphism
$$\varphi = \varphi_{(n_{v_i}, y), (n_{v_j}, z)} :   v_i L_{\K}(E, w) v_j \rightarrow v_i^y L_{\K}(E^+, w^+) v_j^z.$$ 
For $r
% = r_{(m_i, y), (m_j,z)} 
\in L_{\K}(E, w)$ we define
$$ \varphi_{(n_{v_i}, y), (n_{v_j}, z)}(v_i r v_j) = p_i^yv_i r v_j (p_j^z)^*.$$
By Remark \ref{hair-ext-subal}, $L_{\K}(E, w)$ is a $\K$-subalgebra of $L_{\K}(E^+, w^+)$, and so $\varphi$ is a $\K$-vector space homomorphism. If $p_i^yv_i r v_j (p_j^z)^* = 0$, then multiplying on the left by $(p_i^y)^*$ and on the right by $p_j^z$ yields $v_i r v_j = 0$, and therefore $\varphi$ is injective. To show  $\varphi$ is surjective:  for $v_i^y s v_j^z \in v_i^y L_{\K}(E, w) v_j^z$ with $s\in L_{\K}(E^+, w^+)$, define $s' = (p_i^y)^*v_i^y  s v_j^z p_j^z \in v_i L_{\K}(E^+, w^+) v_j$.  But  using the fact that there are no paths from elements of $E^0$ to any of the newly added vertices which yield $E^+$, we have as above that $s'$ may be viewed as an element of $L_{\K}(E, w)$.    Then, using the previous observation that $p_i^y (p_i^y)^* = v_i^y$ in $L_{\K}(E^+, w^+)$, we conclude that $\varphi(s') = p_i^y (p_i^y)^* v_i^y s v_j^z p_j^z (p_j^z)^*  = v_i ^y s v_j^z$, and thus $\varphi$ is surjective.

We now define $\Phi$ to be the $\K$-space isomorphism between the two matrix rings induced by applying each of the $\varphi_{(n_{v_i}, y), (n_{v_j}, z)}$ componentwise.   We need only show that these componentwise isomorphisms respect the corresponding matrix multiplications.    But to do so, it suffices to show that the maps behave correctly in each component.    That is, we need only show, for each $m_\ell$ ($1\leq \ell \leq t$)   and each $x$ ($1 \leq x \leq m_{v_{\ell}}$), that 
$$ \varphi_{(n_{v_i}, y), (n_{v_j}, z)}(v_i r v_\ell) \cdot  \varphi_{(m_{v_{\ell}}, x), (m_j,z)}(v_\ell  r' v_j) =  \varphi_{(n_{v_i}, y), (n_{v_j}, z)}(v_i r v_\ell r'  v_j).$$ 
But this is immediate, as $(p_\ell^x)^* p_\ell^x = v_\ell$ for each $v_\ell \in E^0$ and $1 \leq x \leq m_{v_{\ell}}$, this finishing the proof.  	
\end{proof}	

For clarification, we illustrate Theorem \ref{maintheo1} by presenting the following example.

\begin{example}\label{progeneratorexam} Let $\K$ be a field and $(E, w)$ the following weighted graph
$$E \ \ \ = \ \ \  \xymatrix{  \bullet^{v_1}\ar@(ul,ur)^{e}\ar[r]^{f}&  \bullet^{v_2}\ar@(ul,ur)^{g}\ar[r]^{h}&\bullet^{v_3}\\ \bullet^{v_4}\ar[u]^{x}\ar[ur]_{y}},$$ where all the edges have weight $2$. Let $P = L_{\K}(E, w)v_1$. By Example \ref{progeneratorexam}, we have that $P$ is a  projective and finitely generated generator for  $L_{\K}(E, w) \Mod$, and $$P^8\cong (L_{\K}(E, w)v_1)^3\oplus L_{\K}(E, w)v_2\oplus (L_{\K}(E, w)v_3)^2\oplus (L_{\K}(E, w)v_4)^2.$$ By Theorem \ref{maintheo1}, we obtain that $$\M_8(\textnormal{End}_{L_{\K}(E, w)}(P))\ \cong\ L_{\K}(E^+(3, 1,2, 2), w^+).$$

\end{example}

As a consequence of Theorem \ref{maintheo1}, we obtain the following  interesting result.

\begin{corollary}\label{CornerofwLPAs}
Let $(E, w)$ be a finite vertex weighted graph, $\K$ a field, and   $\epsilon$ a full idempotent in  $L_{\K}(E, w)$. Then there exist a positive integer $n$ and a hair extension $(E^+, w^+)$ of $(E, w)$ such that 
\[\M_n(\epsilon L_{\K}(E, w)\epsilon)\ \cong\ L_{\K}(E^+, w^+).\]	
\end{corollary}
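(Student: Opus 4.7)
The plan is to apply Theorem \ref{maintheo1} to the module $P := L_{\K}(E, w)\epsilon$. First I would check that $P$ is a finitely generated projective generator for $L_{\K}(E, w)\Mod$. Since $(E, w)$ is finite, $L_{\K}(E, w)$ is unital, and the decomposition
\[L_{\K}(E, w) = L_{\K}(E, w)\epsilon \oplus L_{\K}(E, w)(1 - \epsilon)\]
exhibits $P$ as a finitely generated (by $\epsilon$) projective left module. Fullness of $\epsilon$ means $L_{\K}(E, w)\epsilon L_{\K}(E, w) = L_{\K}(E, w)$, so we may write $1 = \sum_{i=1}^{m} a_i \epsilon b_i$ for suitable $a_i, b_i \in L_{\K}(E, w)$. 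The map $P^{m} \to L_{\K}(E, w)$ sending $(x_1, \dots, x_m) \mapsto \sum_i a_i x_i$ is then surjective (its image contains $1$), so the regular module—and hence every object of $L_{\K}(E, w)\Mod$—is an epimorphic image of a direct sum of copies of $P$. This confirms that $P$ is a progenerator.

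Next I would identify $\End_{L_{\K}(E, w)}(P) \cong \epsilon L_{\K}(E, w) \epsilon$ via the standard map $\phi \mapsto \phi(\epsilon)$, which is exactly the same identification that underlies the matrix-ring description used in the proof of Theorem \ref{maintheo1} (with $v$ there replaced by $\epsilon$). Applying Theorem \ref{maintheo1} to $P$ then yields a positive integer $n$ and a hair extension $(E^+, w^+)$ of $(E, w)$ for which
\[\M_n(\epsilon L_{\K}(E, w) \epsilon) \cong \M_n\!\left(\End_{L_{\K}(E, w)}(P)\right) \cong L_{\K}(E^+, w^+),\]
which is the desired conclusion.

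I do not anticipate any real obstacle: the statement is essentially a packaging of Theorem \ref{maintheo1} combined with the textbook identification of a corner at a full idempotent as the endomorphism ring of a finitely generated projective generator. The only cosmetic point to watch is a possible opposite-ring ambiguity in the identification $\End_{L_{\K}(E, w)}(L_{\K}(E, w)\epsilon) \cong \epsilon L_{\K}(E, w)\epsilon$; any such mismatch is absorbed harmlessly by the involution $e_i \leftrightarrow e_i^*$ on weighted Leavitt path algebras recorded after Definition \ref{weighteddef}, which gives $L_{\K}(E^+, w^+) \cong L_{\K}(E^+, w^+)^{\op}$ and is compatible with $\M_n(R)^{\op} \cong \M_n(R^{\op})$.
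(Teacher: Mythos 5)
Your proposal is correct and is exactly the paper's argument: the paper's proof is the one-line observation that $P=L_{\K}(E,w)\epsilon$ is a finitely generated projective generator with $\End_{L_{\K}(E,w)}(P)\cong \epsilon L_{\K}(E,w)\epsilon$, followed by an appeal to Theorem \ref{maintheo1}. One small slip in your verification of the generator property: for \emph{left} modules the map $P^{m}\to L_{\K}(E,w)$ must be given by right multiplications, so from $1=\sum_i a_i\epsilon b_i$ you should send $(x_1,\dots,x_m)\mapsto \sum_i x_i b_i$ (whose image contains $\sum_i (a_i\epsilon)b_i=1$), rather than $(x_1,\dots,x_m)\mapsto\sum_i a_i x_i$, which is not a left-module homomorphism and whose image need not contain $1$ since $\epsilon b_i\notin L_{\K}(E,w)\epsilon$ in general.
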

\begin{proof}
The statement follows from Theorem \ref{maintheo1}, as $L_{\K}(E, w)\epsilon$ ($\epsilon$ is a full idempotent in  $L_{\K}(E, w)$) is a projective and finitely generated generator for the category of left $L_{\K}(E, w)$-modules, and $\textnormal{End}_{L_{\K}(E, w)}(L_{\K}(E, w)\epsilon)\ \cong\ \epsilon L_{\K}(E, w)\epsilon$.
\end{proof}	

We note that in \cite[Theorem 3.15]{an:colpaofgalpa} Abrams and the second author proved that every nonzero corner of the Leavitt path algebra of a finite graph is isomorphic to a Leavitt path algebra. However, this is, in general, not true for the case of weighted Leavitt path algebras.

\begin{example}\label{CornerofwLPAsexample}
Let $\K$ be a field and $(E, w)$ the following weighted graph 
\begin{equation*}
\xymatrix{
	v_1  \ar@(lu,ld)_e\ar[r]^f & v_2, 
}
\end{equation*}	
where $w(e) = w(f) =2$. We then have that $v_1$ is a full idempotent in  $L_{\K}(E, w)$ (since $v_2 = f^*_1f_1 + f^*_2f_2$) and $(L_{\K}(E, w)v_1)^2 \cong L_{\K}(E, w)v_1 \oplus L_{\K}(E, w)v_2$. By Theorem \ref{maintheo1}, it follows that $\M_2(v_1 L_{\K}(E, w)v_1)\ \cong\ L_{\K}(E^+(1, 1), w^+) = L_{\K}(E, w)$. 

Assume that $v_1 L_{\K}(E, w)v_1$ is a weighted Leavitt path algebra. Since $v_1 L_{\K}(E, w)v_1$ is Morita equivalent to $L_{\K}(E, w)$, it follows that $\mathcal{V}(v_1 L_{\K}(E, w)v_1) \cong \mathcal{V}(L_{\K}(E, w))$. By Theorem~\ref{wgraphmonoid}, $ \mathcal{V}(L_{\K}(E, w)) \cong M_{(E,w)}$, and so
\[\mathcal{V}(v_1 L_{\K}(E, w)v_1)\cong M_{(E,w)} = \langle u, v\rangle/ \langle  2v=v+u\rangle.\]
It is easy to see that $M_{(E,w)}$ is an infinite commutative monoid.  On the other hand, by \cite[Proposition 40]{HR}, it follows that $L_{\K}(E, w)$ has a local valuation $\upsilon: L_{\K}(E, w)\longrightarrow \mathbb{N} \cup \{-\infty\}$ such that $0$ is the only element satisfying $\upsilon(0) = -\infty$. Let $x$ and $y$ be two elements in $v_1 L_{\K}(E, w)v_1$. We then have $\upsilon(xy)= \upsilon(x) + \upsilon(y)$ (see \cite[Definition 3.7]{HR}). Now if $xy$ is zero, and $x$ or $y$ is not zero, then the equality  $\upsilon(xy)= \upsilon(x) + \upsilon(y)$ would not happen. This shows that $v_1 L_{\K}(E, w)v_1$ is a domain. It is obvious that $v_1 L_{\K}(E, w)v_1$ is not a commutative ring. Then, by \cite[Theorem 41]{HR}, we have that $\mathcal{V}(v_1 L_{\K}(E, w)v_1) \cong  \langle v\rangle/ \langle  nv=mv\rangle$ which is either a finite cyclic monoid or a free cyclic monoid, a contradiction. Thus $v_1 L_{\K}(E, w)v_1$ is not a weighted Leavitt path algebra. 
\end{example}

\begin{deff}
Let $(E, w)$ be a row-finite weighted graph and $v\in E^0$ a source. We form the \emph{source elimination} weighted graph $(E_{\setminus v}, w_{r})$ of $(E, w)$ as follows: $E_{\setminus v}$ denotes the graph obtained from $E$ by deleting $v$ and all the edges in $E$ emitting from $v$, and $w_r$ is the restriction of the weight function $w$ to $E_{\setminus v}$.
\end{deff}

The following lemma is to extend \cite[Proposition 1.4]{alps:fiitcofpa} and \cite[Lemma 4.3]{ar:fpsmolpa} to the case of weighted Leavitt path algebras.

\begin{lemma}\label{sourceelim}
Let $\K$ be a field, $(E, w)$ a finite weighted graph and $v\in E^0$ a source and not a sink with $w(v) =1$. Then $L_{\K}(E, w)$ is Morita equivalent to 	$L_{\K}(E_{\setminus v}, w_r)$.
\end{lemma}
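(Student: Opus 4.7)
The plan is to show that $\epsilon := 1_{L_{\K}(E,w)} - v = \sum_{u \in E^0 \setminus \{v\}} u$ is a full idempotent in $L_{\K}(E, w)$, and that the corner $\epsilon L_{\K}(E, w) \epsilon$ is isomorphic to $L_{\K}(E_{\setminus v}, w_r)$. Then Morita equivalence will follow from the standard fact that a ring is Morita equivalent to any of its full corners.

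First, since $v$ is not a sink and $(E,w)$ is finite, $v \in E_{\reg}^0$, and since $w(v)=1$ every $e \in s^{-1}(v)$ has $w(e)=1$. Relation (iii) of Definition~\ref{weighteddef} gives $v = \sum_{e \in s^{-1}(v)} e_1 e_1^*$. Because $v$ is a source, $r(e) \neq v$ for all $e \in s^{-1}(v)$, so each summand $e_1 e_1^* = e_1 r(e) e_1^*$ lies in $L_{\K}(E,w) \epsilon L_{\K}(E,w)$. Hence $v$, and therefore $1_{L_{\K}(E,w)}$, lies in the ideal generated by $\epsilon$, proving that $\epsilon$ is a full idempotent.

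Next I would identify the corner with $L_{\K}(E_{\setminus v}, w_r)$. Observe that $(E_{\setminus v}, w_r)$ is a complete weighted subgraph of $(E,w)$: the only regular vertices of $E_{\setminus v}$ are regular vertices $u \neq v$ of $E$, and for such $u$ one has $s_{E_{\setminus v}}^{-1}(u) = s_E^{-1}(u)$ since $v$ is a source. By Corollary~\ref{completewsubgraph}, the canonical map $\iota: L_{\K}(E_{\setminus v}, w_r) \hookrightarrow L_{\K}(E, w)$ is an injective $\K$-algebra homomorphism, and since every generator lies in $\epsilon L_{\K}(E,w) \epsilon$, its image is contained in the corner. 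Let $\phi: L_{\K}(E_{\setminus v}, w_r) \to \epsilon L_{\K}(E,w) \epsilon$ denote the corestriction of $\iota$; it is clearly injective.

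The main step is surjectivity of $\phi$. By Theorem~\ref{GSbasis}, $L_{\K}(E,w)$ has a basis consisting of normal generalized paths, so $\epsilon L_{\K}(E,w) \epsilon$ is spanned by those normal generalized paths $p = x_1 \cdots x_n$ with $s(p), r(p) \in E^0 \setminus \{v\}$. I would argue that $v$ does not appear anywhere along such a path. Indeed, if $v$ occurred internally, there would be an index $k$ with $r(x_k) = v = s(x_{k+1})$. Because $v$ is a source, $r(x_k) = v$ forces $x_k = e_j^*$ with $e \in s^{-1}(v)$; similarly $s(x_{k+1}) = v$ forces $x_{k+1} = f_i$ with $f \in s^{-1}(v)$. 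Since $w(v) = 1$, the indices must satisfy $i = j = 1$, so the subword $e_1^* f_1$ with $e, f \in s^{-1}(v)$ appears. This is precisely one of the forbidden words, contradicting normality of $p$. Consequently, every such $p$ is a normal generalized path in $(E_{\setminus v}, w_r)$ (after a compatible choice of the distinguished edges $\alpha^u$), and hence lies in the image of $\phi$. Thus $\phi$ is an isomorphism, and combining with fullness of $\epsilon$ yields the desired Morita equivalence.

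The main obstacle is verifying that $v$ cannot appear in the interior of a normal generalized path surviving in $\epsilon L_{\K}(E,w) \epsilon$; this is exactly where the hypothesis $w(v) = 1$ is used, since without it the subword $e_j^* f_i$ with $i \neq 1$ or $j \neq 1$ would not automatically be forbidden, and the corner would genuinely be larger than $L_{\K}(E_{\setminus v}, w_r)$.
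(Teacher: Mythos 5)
Your proposal is correct and follows essentially the same route as the paper: show that $\epsilon=\sum_{u\in E^0\setminus\{v\}}u$ is a full idempotent using $v=\sum_{e\in s^{-1}(v)}e_1e_1^*$ (where $w(v)=1$ forces all these edges to have weight $1$), and identify the corner $\epsilon L_{\K}(E,w)\epsilon$ with $L_{\K}(E_{\setminus v},w_r)$ via the complete-subgraph embedding of Corollary~\ref{completewsubgraph}. The only divergence is in the surjectivity step, where the paper simply asserts that a generalized path with source and range away from $v$ cannot pass through the source $v$, whereas you restrict to the normal-form basis of Theorem~\ref{GSbasis} and rule out an interior occurrence of $v$ via the forbidden subword $e_1^*f_1$ --- a welcome sharpening, since a non-normal generalized path such as $e_1^*f_1$ with $e,f\in s^{-1}(v)$ does pass through $v$ before being reduced by relation (iv).
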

\begin{proof}
Since $(E_{\setminus v}, w_r)$ is a complete weighted subgraph of $(E, w)$ and by Corollary \ref{completewsubgraph}, 	the map $\phi: L_{\K}(E_{\setminus v}, w_r)\longrightarrow L_{\K}(E, w)$, defined by $u\longmapsto u$ for all $u\in E^0\setminus \{v\}$,  $e_i\longmapsto e_i$ and $e^*_i\longmapsto e^*_i$ for all $e\in E^1\setminus s^{-1}(v)$ and $1\le i\le w(e)$, is an injective homomorphism of $\K$-algebras.	

Let $\epsilon = \phi(1_{L_{\K}(E_{\setminus v}, w_r)}) = \sum_{u\in E^0, u\neq v}u$. We claim that $\phi(L_{\K}(E_{\setminus v}, w_r)) = \epsilon L_{\K}(E_{\setminus v}, w_r)\epsilon$ by following the proof of \cite[Lemma 4.3]{ar:fpsmolpa}. It is obvious that 
$\phi(L_{\K}(E_{\setminus v}, w_r)) \subseteq \epsilon L_{\K}(E, w)\epsilon$. 

Conversely, let $p = x_1\cdots x_n$ be a generalized path in $(E, w)$ such that $s(x_1)\neq v$ and $r(x_n)\neq v$, where $x_i\in \{e_i, e^*_i\mid e\in E^1, 1\le i\le w(e)\}$. Since $v$ is a source, $p$ cannot pass through $v$, and so $p$ is a generalized path in $E_{\setminus v}$. Consequently, $p = \phi(p)\in \phi(L_{\K}(E_{\setminus v}, w_r))$, showing the claim.

We next show that $L_{\K}(E, w) = L_{\K}(E, w)\epsilon L_{\K}(E, w)$. It suffices to prove that $v$ is in $L_{\K}(E, w)\epsilon L_{\K}(E, w)$. Since $w(v) =1$, it follows that $w(e) =1$ for all $e\in s^{-1}(v)$. Since $r(e)\in L_{\K}(E, w)\epsilon L_{\K}(E, w)$ for all $e\in s^{-1}(v)$, $e_1 = e_1r(e)\in L_{\K}(E, w)\epsilon L_{\K}(E, w)$ for all $e\in s^{-1}(v)$. We then have $$v = \sum_{e \in s^{-1}(v)}e_1e^*_1\in  L_{\K}(E, w)\epsilon L_{\K}(E, w),$$ which yields $L_{\K}(E, w) = L_{\K}(E, w)\epsilon L_{\K}(E, w)$. Therefore, $L_{\K}(E, w)$ is Morita equivalent to 	$L_{\K}(E_{\setminus v}, w_r)$, thus finishing the proof.
\end{proof}	

We are now in a position to give the second main result of this article, describing a unital $\K$-algbera being Morita equivalent to a weighted Leavitt path algebra.

\begin{theorem}\label{maintheo2}
Let $(E, w)$ be a finite vertex weighted graph, $\K$ a field, and   $A$ a unital $\K$-algebra. Then $A$ is Morita equivalent to $L_{\K}(E, w)$ if and only if there exist a positive integer $n$ and a hair extension $(E^+, w^+)$ of $(E, w)$ such that 
$\M_n(A)\ \cong\ L_{\K}(E^+, w^+).$		
\end{theorem}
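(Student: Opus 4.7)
The plan is to invoke classical Morita theory: a unital $\K$-algebra $A$ is Morita equivalent to $L_{\K}(E, w)$ if and only if $A^{\op} \cong \End_{L_{\K}(E,w)}(P)$ for some finitely generated progenerator $P$ in $L_{\K}(E,w)\Mod$, and in addition $\M_n(B)$ is Morita equivalent to $B$ for every unital ring $B$. Combined with Theorem \ref{maintheo1} and Lemma \ref{sourceelim}, these ingredients make the forward implication a direct corollary of Theorem \ref{maintheo1}, while the reverse implication reduces to proving that every hair extension $L_{\K}(E^+, w^+)$ is itself Morita equivalent to $L_{\K}(E, w)$.

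For the forward direction, assume $A$ is Morita equivalent to $L_{\K}(E, w)$. Choose a finitely generated progenerator $P \in L_{\K}(E,w)\Mod$ with $A^{\op} \cong \End_{L_{\K}(E,w)}(P)$. Theorem \ref{maintheo1} then produces a positive integer $n$ and a hair extension $(E^+, w^+)$ of $(E, w)$ such that
\[\M_n(A^{\op}) \cong \M_n(\End_{L_{\K}(E,w)}(P)) \cong L_{\K}(E^+, w^+).\]
Because $L_{\K}(E^+, w^+)$ carries an involution and is therefore isomorphic to its opposite ring, taking opposites on both sides gives $\M_n(A) \cong L_{\K}(E^+, w^+)$, as desired.

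For the reverse direction, suppose $\M_n(A) \cong L_{\K}(E^+, w^+)$ for some hair extension. Since $A$ is Morita equivalent to $\M_n(A)$, it suffices to show that $L_{\K}(E^+, w^+)$ is Morita equivalent to $L_{\K}(E, w)$. Each hair has the shape $v_i^{n_i-1}\to v_i^{n_i-2}\to \cdots \to v_i^1 \to v_i$, and every hair edge carries weight $1$ by Definition \ref{hairdef}. The outermost vertex $v_i^{n_i-1}$ is a source in $E^+$ that is not a sink and satisfies $w^+(v_i^{n_i-1}) = 1$, so Lemma \ref{sourceelim} shows that $L_{\K}(E^+, w^+)$ is Morita equivalent to $L_{\K}(E^+_{\setminus v_i^{n_i-1}}, w^+_r)$. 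In the resulting weighted graph the next vertex $v_i^{n_i-2}$ becomes a source with the same properties, and we iterate. After at most $\sum_{i=1}^{t}(n_i-1)$ applications of Lemma \ref{sourceelim} we remove every hair vertex and arrive precisely at $(E, w)$. Transitivity of Morita equivalence then yields the required conclusion.

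The one point requiring care is verifying at each stage of the iterated source elimination that the outermost remaining hair vertex still meets the hypotheses of Lemma \ref{sourceelim}; this is however immediate from the linear structure of each hair, since after deleting an outer hair vertex its unique neighbor becomes a source, remains non-sink (it emits one edge toward the interior of the hair), and has weight $1$ because the edge it emits was assigned weight $1$ in the hair extension. Beyond this bookkeeping, no new algebraic content is needed past Theorem \ref{maintheo1} and Lemma \ref{sourceelim}.
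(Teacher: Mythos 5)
Your proof is correct, and your backward direction coincides with the paper's: both strip the hairs by iterated application of Lemma \ref{sourceelim}, checking at each stage that the outermost remaining hair vertex is a non-sink source of weight $1$. The forward direction, however, takes a genuinely different route. The paper writes $A \cong \epsilon\,\M_m(L_{\K}(E,w))\,\epsilon$ for a full idempotent $\epsilon$ in a matrix ring, identifies $\M_m(L_{\K}(E,w))$ with $L_{\K}(\M_m E,\bar w)$ via Theorem \ref{matrixalgeoverwLPA}, and then applies Corollary \ref{CornerofwLPAs} to that larger algebra; this produces a hair extension of $(\M_m E,\bar w)$, which the paper asserts parenthetically is ``consequently'' a hair extension of $(E,w)$. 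You instead realize the Morita equivalence by a finitely generated progenerator $P$ over $L_{\K}(E,w)$ itself and apply Theorem \ref{maintheo1} directly, using the canonical involution of a weighted Leavitt path algebra to dispose of the opposite ring. Your route is shorter and, more to the point, it delivers a hair extension of $(E,w)$ on the nose: in the paper's version the hairs produced by Corollary \ref{CornerofwLPAs} are attached to \emph{all} vertices of $\M_m E$, including the newly added head vertices, so the resulting graph is in general a tree extension rather than a single linear hair at each vertex of $E$ as Definition \ref{hairdef} requires, and the parenthetical needs a small extra argument that your approach makes unnecessary. The only conventional point in your write-up is whether $\End_{L_{\K}(E,w)}(P)$ is $A$ or $A^{\op}$ (this depends on whether endomorphisms of left modules are written on the left or the right), but since you pass through the involution of $L_{\K}(E^+,w^+)$ and the transpose isomorphism $\M_n(A^{\op})\cong \M_n(A)^{\op}$, your conclusion is insensitive to that choice.
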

\begin{proof}
Assume that $A$	is Morita equivalent to $L_{\K}(E, w)$. We then have $A \cong \epsilon \M_m(L_{\K}(E, w))\epsilon$ for some $m\ge 1$ and a full idempotent $\epsilon\in \M_m(L_{\K}(E, w))$. By Theorem \ref{matrixalgeoverwLPA}, $\M_m(L_{\K}(E, w))\cong L_{\K}(\M_mE, \bar{w})$ as $\K$-algebras. Since $(\M_mE, \bar{w})$ is a finite vertex weighted graph and by Corollary \ref{CornerofwLPAs}, there exist a positive integer $n$ and a hair extension $(E^+, w^+)$ of $(\M_mE, \bar{w})$ (consequently, a hair extension of $(E, w)$) such that  $\M_n(\epsilon L_{\K}(\M_mE, \bar{w})\epsilon)\ \cong\ L_{\K}(E^+, w^+).$ This implies that $\M_n(A)\ \cong\ L_{\K}(E^+, w^+).$ 

Conversely, suppose there exist a positive integer $n$ and a hair extension $(E^+, w^+)$ of $(E, w)$ such that 
$\M_n(A)\ \cong\ L_{\K}(E^+, w^+).$ Since $A$ is Morita equivalent to $\M_n(A)$, it follows that $A$ is Morita equivalent to $L_{\K}(E^+, w^+)$. We note that all newly added vertices which yield $E^+$ are sources and regular vertices of weight $1$. So we may apply the source elimination process by step-by-step at these vertices. At each step, the Morita equivalence of the corresponding weighted Leavitt path algebras is preserved by Lemma \ref{sourceelim}. Hence $L_{\K}(E^+, w^+)$ is Morita equivalent to $L_{\K}(E, w)$, as desired.
\end{proof}	

In \cite[Theorem 6.1]{GeneRooz} Abrams and the first author showed a nice connection between sandpile momoids and weighted Leavitt path algebras that any conical sandpile monoid $\text{SP}(E)$ of a directed sandpile graph $E$ can be realised as the $\mathcal{V}$-monoid of a weighted Leavitt path
algebra $L_{\K}(F, w)$, where $F$ is an explicitly constructed subgraph of $E$. Motivated by this result, they introduced sandpile algebras. As a consequence of Theorem \ref{maintheo2}, we next describe unital algebras being Morita equivalent to sandpile algebras. To do so, we need to recall some useful notions.

A finite graph $E$ is called a {\it sandpile graph} if $E$ has a unique sink (denote it by $s$), and for every $v\in E^0$ there exists a path $p$ with $s(p) = v$ and $r(p) = s$. Any sandpile graph $E$ may be given the structure of a balanced weighted graph $(E, w)$ by assigning $w(v) = |s^{-1}(v)|$ for all $v\in E_{\reg}^0$. A sandpile graph $E$ is called {\it conical} if any vertex not connected to a cycle has weight one. A unital algebra $A$ over a field $\K$ is called a {\it sandpile algebra} if there
exists a conical sandpile graph $E$ for which $$A \cong L_{\K}(E/S_E, w_r),$$ where $S_E$ is the set of all vertices in $E$ which do not connect to any cycle in $E$, and $w_r$ is the restriction of the weight function $w$ to $E/S_E$  (see \cite[Definition 6.11]{GeneRooz}).

\begin{cor}\label{sandpilealgebra}
Let $\K$ be a field and $A$ a unital $\K$-algebra. Then $A$ is Morita equivalent to a sandpile $\K$-algebra if and only if there exist a positive integer $n$, a conical sandpile graph $E$ and a hair extension $(G, w_G)$ of the vertex weighted graph $(E/S_E, w_r)$ such that $$\M_n(A)\cong L_{\K}(G, w_G).$$	
\end{cor}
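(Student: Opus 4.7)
The plan is to derive the corollary as an immediate specialization of Theorem \ref{maintheo2}, together with one small preliminary verification about the graph $(E/S_E, w_r)$.

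First, I would check that for any conical sandpile graph $E$, the pair $(E/S_E, w_r)$ is a finite vertex weighted graph in the sense of Definition \ref{weightedgraphdef}. Finiteness is automatic from finiteness of $E$. For the vertex-weighted property, the idea is to observe that any sandpile graph carries a balanced weighted structure via $w(v)=|s^{-1}(v)|$ on each regular $v$, and that a balanced weighted graph is by Definition \ref{weightedgraphdef} in particular a vertex weighted graph: all edges leaving a fixed regular vertex share the common weight $w(v)$. I would then verify that $S_E$ is hereditary (if $v\in S_E$ has no path to a cycle, neither does any immediate successor of $v$), which is what is needed to form the quotient $E/S_E$, and note that restricting $w$ to $E/S_E$ preserves the property that all edges out of a given regular vertex share a common weight. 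Hence $(E/S_E, w_r)$ satisfies the hypotheses of Theorem \ref{maintheo2}.

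With this in place, both implications fall out of Theorem \ref{maintheo2} applied with $(E/S_E, w_r)$ in the role of the finite vertex weighted graph. For the forward direction, if $A$ is Morita equivalent to a sandpile algebra, then by the definition of sandpile algebra this means $A$ is Morita equivalent to $L_{\K}(E/S_E, w_r)$ for some conical sandpile graph $E$; Theorem \ref{maintheo2} then supplies a positive integer $n$ and a hair extension $(G, w_G)$ of $(E/S_E, w_r)$ with $\M_n(A)\cong L_{\K}(G, w_G)$. For the reverse direction, given the data of $n$, $E$ and $(G, w_G)$ in the statement, Theorem \ref{maintheo2} forces $A$ to be Morita equivalent to $L_{\K}(E/S_E, w_r)$, and the latter is a sandpile algebra by definition.

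The main (and in fact rather mild) obstacle is purely bookkeeping: making sure that Theorem \ref{maintheo2} is applicable, which amounts to confirming finiteness, hereditariness of $S_E$, and preservation of the vertex-weighted property under passage to the quotient. No genuinely new ideas are required beyond Theorem \ref{maintheo2}.
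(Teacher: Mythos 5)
Your proposal is correct and takes essentially the same route as the paper, which simply states that the corollary follows immediately from Theorem \ref{maintheo2}; your extra verification that $(E/S_E, w_r)$ is a finite vertex weighted graph with $S_E$ hereditary is sound bookkeeping that the paper leaves implicit.
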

\begin{proof}
The statement immediately follows from Theorem \ref{maintheo2}.
\end{proof}

\section{Acknowledgements}
The first author would like to wholeheartedly thank Vietnam Institute for Advanced Study Mathematics (VIASM) and his host T.G. Nam for the warm hospitality. He also acknowledges Australian Research Council grant DP230103184. The second author was supported by the Vietnam Academy of Science and Technology under grant
CTTH00.01/24-25.

%%%%%%%%%%%%%%%%%%%%%%%%%%%%%%%%%%%%%%%%

%  Now continue with Roozbeh's stuff

\bigskip
\bigskip


\begin{thebibliography}{99}
\bibitem{aap05} G. Abrams, G. Aranda Pino, The Leavitt path algebra of a graph, {\it J. Algebra} {\bf 293} (2005), 319--334.

%\bibitem{aap:pislpa} G. Abrams, G. Aranda Pino, Purely infinite simple Leavitt path algebras, {\it J. Pure Appl. Algebra} {\bf 207} (2006)  553--563.

\bibitem{TheBook}   G. Abrams, P. Ara, M. Siles Molina,  {\it Leavitt Path Algebras}, Lecture Notes in Mathematics series, Vol. 2191, Springer-Verlag Inc., 2017.

\bibitem{adn:rcolpaasawcctg} G. Abrams, M. Dokuchaev and T. G. Nam, Realizing corners of Leavitt path algebras as Steinberg algebras, with corresponding connections to graph $C^*$-algebras, {\it J. Algebra} {\bf 593} (2022), 72-104.



\bibitem{GeneRooz} G. Abrams, R. Hazrat, Connections between Abelian sandpile models and the K-theory of weighted Leavitt path algebras,  {\it Eur. J. Math.} {\bf 9} (2023), no. 2, Paper No. 21, 28 pp.

%\bibitem{aapm:ccflps} G. Abrams, G. Aranda-Pino, F. Perera and M. Siles Molina, Chain conditions for Leavitt path algebras, {\it Forum Math.} {\bf 22} (2010), 95-114.

\bibitem{alps:fiitcofpa}
G. Abrams, A. Louly, E. Pardo, C. Smith, Flow invariants in the classification of Leavitt path algebras, {\it J. Algebra} {\bf 333} (2011), 202--231.

\bibitem{an:colpaofgalpa} G. Abrams and T. G. Nam, Corners of Leavitt path algebras of finite graphs are Leavitt path algebras, {\it J. Algebra} {\bf 547} (2020), 494-518.


%\bibitem{ClassQ}  G. Abrams, P.N. \'{A}nh, A. Louly, E. Pardo, \emph{The classification question for Leavitt path algebras}, J. Algebra {\bf 320} (2008), 1983--2026.  

\bibitem{AT}  G. Abrams, M. Tomforde,   Isomorphism and Morita equivalence of graph algebras,  {\it Trans. Amer. Math. Soc.} {\bf 363} (2011), 3733-3767.  

%\bibitem{agp:kopisrr} P. Ara,  K. R. Goodearl, E. Pardo, $K_0$ of purely infinite simple regular rings, {\it $K$-Theory} {\bf 26} (2002), no. 1, 69-100.


\bibitem{amp} P. Ara, M.A. Moreno, E. Pardo, Nonstable $K$-theory for graph algebras, {\it Algebr. Represent. Theory} {\bf 10} (2007), 157--178.

\bibitem{ar:fpsmolpa} P. Ara and K.M. Rangaswamy, Finitely presented simple modules over Leavitt path algebras, {\it J. Algebra}
{\bf 417} (2014), 333--352.
 
\bibitem{BT} L. Babai,   E. Toumpakari,  \emph{A structure theory of the sandpile monoid for directed graphs}, 2010.   preprint. 
 
\bibitem{bak}  P. Bak, C. Tang, and K. Weisenfeld, Self-organized criticality: an explanation of $1/f$ noise, {\it Phys. Rev. Lett.} {\bf 59} (1987), 381--384.
 
\bibitem{bakbook} P. Bak,   {\it How Nature Works},   Oxford University Press, Oxford, 1997.     
 
 %\bibitem{B}  G.M. Bergman, Coproducts and some universal ring constructions, {\it Trans. Amer. Math. Soc.} {\bf 200} (1974), 33--88.
 
 %\bibitem{bergman78}  G.M.  Bergman, The Diamond lemma for ring theory, {\it Adv. Math.} {\bf 29} (1978), 178-218.
 

 \bibitem{BLS}   A. Bj\"{o}rner, L. Lov\'{a}sz and P. Shor, Chip-firing games on graphs, {\it European J. Combin.} {\bf 12} (1991), 283-291.
 
 \bibitem{CGGMS}   S. Chapman, R. Garcia, L. Garc\'{i}a-Puente, M. Malandro, K.  Smith, Algebraic and combinatorial aspects of sandpile monoids on directed graphs,  {\it  J. Combin. Theory Ser. A} {\bf 120} (2013), 245--265.



%\bibitem{corper} S. Corry, D. Perkinson.   Divisors and Sandpiles: An Introduction to Chip-Firing, American Mathematical Soc., Providence, RI,  2018.   ISBN:  978147044218

\bibitem{D1}  D. Dhar, Self-organized critical state of sandpile automaton models, {\it Phys. Rev. Lett.} {\bf 64} (14) (1990), 1613--1616.

%\bibitem{D2}  D. Dhar, \emph{Theoretical studies of self-organized criticality}, Physica A {\bf 369} (2006) 29-70.  

%\bibitem{goodearl} K.R. Goodearl, {\it von Neumann regular rings}, 2nd ed., Krieger Publishing Co., Malabar, FL, 1991.   ISBN-13: 978-0894646324  


\bibitem{H}  R. Hazrat, The graded structure of Leavitt path algebras, {\it Israel J. Math.} {\bf 195} (2013), 833--895.

\bibitem{HR} R. Hazrat,  R. Preusser, Applications of normal forms for weighted Leavitt path algebras: simple rings and domains, {\it Algebr. Represent. Theory} {\bf 20} (2017), no. 5, 1061-1083. 


\bibitem{vitt62}W.G.  Leavitt, The module type of a ring, {\it Trans. Amer. Math. Soc.} {\bf 103} (1962), 113--130. 

\bibitem{LevinePeres} L. Levine and V. Peres, Laplacian growth, sandpiles, and scaling limits, {\it Bull. Amer. Math. Soc. (N.S.)} {\bf 54} (2017), no. 3, 355-382.

%\bibitem{Kl} C.J. Klivans, The Mathematics of Chip-Firing, CRC Press, 2019.    ISBN-13: 978-1138634091 

%\bibitem{magurn} B. Magurn, An algebraic introduction to K-theory, \emph{Encyclopedia of Mathematics and its Applications}, 87. Cambridge University Press, Cambridge, 2002. 

%\bibitem{N}   M.H.A. Newman, \emph{On Theories with a Combinatorial Definition of ``Equivalence"}, Annals Math. {\bf 43}(2) (1942), 223--243. 

\bibitem{P}  R. Preusser, The $\mathcal{V}$-monoid of a weighted Leavitt path algebra, {\it Israel J. Math} {\bf 234} (2019), 125--147.

\bibitem{Pre}  R. Preusser, \emph{Weighted Leavitt path algebras, an overview}, 	arXiv:2109.00434. 


%\bibitem{RGS}  J.C. Rosales, P.A. Garc\'{i}a-Sanchez, Finitely generated commutative monoids. Nova Science Publishers, Inc., Commack, NY, 1999

\bibitem{T}  E. Toumpakari, \emph{On the abelian sandpile model}. Thesis (Ph.D.) -- The University of Chicago. 2005. 75 pp. ISBN: 978-0542-04489-2.

%\bibitem{wehrung} F. Wehrung, Refinement Monoids, Equidecomposability Types, and Boolean Inverse Semigroups. Lecture Notes in Mathematics {\bf 2188}, Springer, 2017.   ISBN-13: 978-3319615981 
\end{thebibliography}
\end{document}